\theoremstyle{plain}
\newtheorem{theorem}{Theorem}
\newtheorem{proposition}{Proposition}
\newtheorem{lemma}{Lemma}
\newtheorem{definition}{Definition}
\newtheorem{conjecture}{Conjecture}
\newtheorem{problem}{Problem}
\begin{document}
\date{}

\title[Classification of some Global Integrals]
{\bf Classification of some Global Integrals related to groups of
type $A_n$ }
\author{ David Ginzburg}

\begin{abstract}
In this paper we start a classification of certain global integrals. First, we use the language of unipotent orbits to write down a family of global integrals. We then classify all those integrals which satisfy the dimension equation we set. After doing so, we check which of these integrals are global unipotent integrals. We do all this for groups of type $A_n$, and using all this we derive a certain interesting conjecture about the length of these integrals.

\end{abstract}

\thanks{ The author is partly supported by the Israel Science
Foundation grant number  259/14}

\address{ School of Mathematical Sciences\\
Sackler Faculty of Exact Sciences\\ Tel-Aviv University, Israel
69978 } \maketitle \baselineskip=18pt

\section{\bf Introduction}

In this paper we begin a classification of what we refer to as
global unipotent integrals. Constructing global integrals is one of
the ways in which one can study the Langlands conjectures related to
$L$ functions. In this method, one constructs a global integral
which depends on a complex variable $s$, and the goal is to determine
if this integral is Eulerian. To do so one carries out the process
of unfolding, which consists mainly on a series of Fourier
expansions. At the end we obtain an integral which involves an
integration over a quotient of the type $N({\bf A})\backslash G({\bf
A})$ where $N$ and $G$ are two groups. The integral involves a set
of functionals or bi-linear forms etc. Then the task is to determine if
this integral is factorizable. This is the case if, for example, the
functional or the bi-linear forms etc.  which appear in the
integral, satisfies some uniqueness properties. Maybe the most
difficult part in this program  is to determine a way of how to
actually construct the initial global integral. In \cite{G1} we
describe a way using the language of unipotent orbits to construct
such global integrals. One of the most general construction is given
by integral \eqref{global1} which appears in the next section.
However, as mentioned at the end of that section, this is not the
most general construction. Nevertheless, the vast majority of global
Eulerian integrals which appear in the literature are of the type of
integral \eqref{global1}.

Thus, one of the main problems  is to determine all global integrals
\eqref{global1} which, for $\text{Re}(s)$ large, after the unfolding
process, unfold to the global integral \eqref{global2}. We refer to
such integrals as global unipotent integrals. As mentioned above,
the process of unfolding involves mainly certain Fourier expansions.
Therefore, a good knowledge of the Fourier coefficients of the
representations in question, is crucial. In our context, for a
representation $\pi$, this is captured by the set of unipotent
orbits ${\mathcal O}(\pi)$. This notion  is defined in \cite{G2}
definition 2.1. This, in turn, leads to the definition of the
dimension of the representation $\pi$, and to the definition of what
we refer to as the dimension equation of a global integral. See
equation \eqref{dim1}. Roughly speaking, this equation states that
the sum of the dimensions of the representations involved in the
integral, is equal to the sum of the dimensions of the  groups which
are involved in the integration. There are several reasons which
motivate  to set up this dimension equation. Maybe the main reason
is simply because all known integrals which unfold to integrals involving the Whittaker
coefficient of at least some of the representations, do
satisfy this equation. See \cite{G1} and \cite{G3}.

To summarize, a main aspect of this theory is to classify all
nonzero global unipotent integrals which are given by integral
\eqref{global1}, and which satisfies the dimension equation
\eqref{dim1}.

In this paper we consider this classification for the group
$G=GL_m$. We do not do it for the most general case, but rather
restrict things to the case where the embedding of the group $G$
inside the groups $G_j$, see section 2, is such that the center of
$G$ coincides with the center of $G_j$. In section 3 we list all
possible Fourier coefficients, defined on an arbitrary reductive
group $H$, such that the stabilizer of this coefficient inside some
Levi part of a certain parabolic subgroup, is the group $G$ embedded in $H$
as mentioned above. Once we classify all such Fourier coefficients,
we obtain a list of global integrals defined by integral
\eqref{global1}. The first step is to write down the dimension
equation for these integrals, and to study it. This is done in
section 4. There are two main consequences which arise from the
study of this equation. The first one is a list of global integrals
which, in the cases of $m=2,3$,  is a complete list of all possible
integrals. For $m=2$ this list is given in Tables 1 and 2, and for
$m=3$ it is given in Tables 3-7. When $m\ge 4$ we only get a partial
list, given in Table 8. We emphasize that this Tables list  global
integrals of the type of integral \eqref{global1} which satisfies
the dimension equation \eqref{dim1}. It does not guarantee that these
integrals are nonzero, and if nonzero that they are a global {\sl
unipotent} integral.

An interesting result regarding what we define as the length of the integral, follows
from these Tables. Given a global integral of the type of integral
\eqref{global1}, we define its length to be the number of
representations involved in the integral. Conjecture \ref{conj1},
states that for all $m\ge 2$, if a global integral is a nonzero
global unipotent integral with $G=GL_m$, then its length is at most
three. In section 4 we prove this conjecture for $m=2,3$ and
preliminary computations indicate that this conjecture is true for
all $m$. We hope to study this problem in the near future. We should
mention that while this conjecture is obvious for $m=2$, for $m=3$
it requires a nontrivial result about cuspidal representations of
the exceptional group $E_6$. This result, is stated in lemma
\ref{lem1} and proved in the last section of the paper.

Sections 5,6 and 7 consists of unfolding the global integrals for
the case when $m=2$. As mentioned above, the lists we obtain in
section 4 do not guarantee that the global integral in question is nonzero, or
if it is a global unipotent integral. For that we need the process of
the unfolding. After some preparation which are done in sections 5
and 6, in section 7 we prove two Theorems which states in which
cases the global integral is actually a nonzero global unipotent
integral. For that we define the notion of an odd Eisenstein
series, and prove in Theorems \ref{th1} and \ref{th2} that a global
integral which appears in Tables 1 or 2 is a nonzero global unipotent
integral if and only if at least one of the representations involved in the
integral, is an odd Eisenstein series.

\section{\bf The Basic Setup}

Let ${\bf A}$ be the ring of adeles of a global field $F$. Let
$\psi$ denote a nontrivial additive character of $F\backslash {\bf
A}$. For basic facts and notation about unipotent orbits we refer to
\cite{C} and \cite{C-M}.

We first recall some basic facts about unipotent orbits. As
explained in \cite{G2} section 2, given a reductive group $H$, and a
unipotent orbit ${\mathcal O}$ of $H$, one can associate with this
orbit a set of Fourier coefficients. Thus, to ${\mathcal O}$, we can
associate a certain unipotent subgroup $U({\mathcal O})$ of $H$, and
a character $\psi_{U({\mathcal O})}$ of $U({\mathcal
O})(F)\backslash U({\mathcal O})({\bf A})$. It is possible that to a
given unipotent orbit there will correspond infinite number of
Fourier coefficients. Hence the choice of the character
$\psi_{U({\mathcal O})}$ is not always unique.

Given an irreducible automorphic representation $\pi$ of $H({\bf
A})$, one can associate with it a set of unipotent orbits of $H$,
which we denote by ${\mathcal O}_{H}(\pi)$. This set is defined in
\cite{G2} definition 2.1. As mentioned in that reference, it is
conjectured that this set consists of a unique unipotent orbit.
Henceforth, we shall assume that this is the case. Another notion we
need is the notion of the dimension of $\pi$. We define $\text{dim}\
\pi=\frac{1}{2}\text{dim}\ {\mathcal O}_{H}(\pi)$. Notice that this
notion is well defined only if we assume that ${\mathcal
O}_{H}(\pi)$ consists of one unipotent orbit.

For $1\le i\le l$, let $G_i$ denote $l$ reductive groups. For $1\le
j\le l$, let $\pi_j$ denote an automorphic representation defined on
$G_j({\bf A})$. Let ${\mathcal O}_{G_i}$ denote a unipotent orbit of
the group $G_i$. As explained above, we let $U({\mathcal O}_{G_i})$
denote the corresponding unipotent group, and we choose a corresponding character
$\psi_{U({\mathcal O}_{G_i})}$ of $U({\mathcal
O}_{G_i})(F)\backslash U({\mathcal O}_{G_i})({\bf A})$. Assume, that
the stabilizer of $\psi_{U({\mathcal O}_{G_i})}$ inside a suitable
Levi subgroup of $G_i$ contains the {\sl same} reductive group $G$. Then
we can form the global integral
\begin{equation}\label{global0}
\int\limits_{Z({\bf A})G(F)\backslash G({\bf
A})}\varphi_{\pi_1}^{U_1,\psi_{U_1}}(g)
\varphi_{\pi_2}^{U_2,\psi_{U_2}}(g)\ldots
\varphi_{\pi_{l-1}}^{U_{l-1},\psi_{U_{l-1}}}(g)
\varphi_{\pi_{l}}^{U_l,\psi_{U_l}}(g)dg
\end{equation}
Here $Z$ is the center of $G$, and we assume that the embedding of $G$ is such that we can divide by $Z$. For each $i$ we have
$$\varphi_{\pi_i}^{U_i,\psi_{U_i}}(g)=
\int\limits_{U({\mathcal O}_{G_i})(F)\backslash U({\mathcal
O}_{G_i})({\bf A})}\varphi_{\pi_i}(ug)\psi_{U({\mathcal
O}_{G_i})}(u)du$$
Allowing the vectors $\varphi_{\pi_i}$ to vary in the space of the representation $\pi_i$, the above set of Fourier coefficients defines an automorphic representation $\sigma_i$ of $G({\bf A})$. Henceforth, we will assume that for all $i$, the representation $\sigma_i$ is not a one dimensional representation.

We assume that the representations $\pi_j$ are
such  that the integral \eqref{global0} converges. This will be the case if we
assume that one of the representations $\pi_i$ is an irreducible
cuspidal representation. After reordering, we may assume that
$\pi_1$ is cuspidal. We also want the above integral to depend on a
complex variable $s$. To do so, we assume that $\pi_l=E_\tau$ is an
Eisenstein series defined on the group $G_l({\bf A})$. Thus, the
global integral we study is given by
\begin{equation}\label{global1}
\int\limits_{Z({\bf A})G(F)\backslash G({\bf
A})}\varphi_{\pi_1}^{U_1,\psi_{U_1}}(g)\varphi_{\pi_2}^{U_2,\psi_{U_2}}(g)\ldots
\varphi_{\pi_{l-1}}^{U_{l-1},\psi_{U_{l-1}}}(g)E_\tau^{U_l,\psi_{U_l}}(g,s)dg
\end{equation}
As mentioned above, and as defined in \cite{G2}, to each 
representation $\pi_j$, one can associate a unipotent orbit
${\mathcal O}_{G_j}(\pi_j)$. Similarly, we define the set ${\mathcal
O}_{G_l}(E_\tau(\cdot,s))$. See \cite{G2} section 5. Thus, for $1\le
j\le l-1$, we may associate with each representation $\pi_j$, a unipotent subgroup of $G_j$, which we shall
denote by $V_j(\pi_j)$, and a character $\psi_{V_j(\pi_j)}$ of
$V_j(\pi_j)(F)\backslash V_j(\pi_j)({\bf A})$ such that the Fourier
coefficient
$$L_{\pi_i}(g_i)=\int\limits_{V_i(\pi_i)(F)\backslash V_i(\pi_i)({\bf
A})}\varphi_{\pi_i}(v_ig_i)\psi_{V_i(\pi_i)}(v_i)dv_i$$  is not zero
for some choice of data. Similarly, for the Eisenstein series $E_\tau(\cdot,s)$we can associate an integral $L_\tau$ which is defined in a similar way as above.

Suppose that for $\text{Re} (s)$ large, after unfolding integral
\eqref{global1}, we obtain the integral
\begin{equation}\label{global2}
\int\limits_{Z({\bf A})M({\bf A})\backslash G({\bf
A})}L^{R_1}_{\pi_1}(g)L^{R_2}_{\pi_2}(g)\cdots
L^{R_{l-1}}_{\pi_{l-1}}(g)f_{L^{R_l}_\tau}(w_0g)dg
\end{equation}
Here, $M$ is a certain subgroup of $G$, and
$$L^{R_i}_{\pi_i}(g)=\int\limits_{R_i({\bf
A})}L_{\pi_i}(rg)\psi_{R_i}(r)dr$$ where $R_i$ is a certain
unipotent subgroup of $G_i$, and $\psi_{R_i}$ is an additive
character defined on this group.

To make things clear we give an example. Consider the integral which represents the exterior square $L$ function defined on a cuspidal representation $\pi_1$ of $GL_4({\bf A})$. This integral was introduced in \cite{J-S}. Here $G=GL_2$, and integral \eqref{global1} is given by
\begin{equation}\label{ext1}
\int\limits_{Z({\bf A})GL_2(F)\backslash GL_2({\bf A})}\varphi_{\pi_1}^{U_1,\psi_{U_1}}(g) E^{U_2,\psi_{U_2}}(g,s)dg\notag
\end{equation}
Here $U_1$ is the subgroup of $GL_4$ consisting of all unipotent matrices of the form $\begin{pmatrix} I_2&X\\ &I_2\end{pmatrix}$, and $\psi_{U_1}$ is the character given by $\psi(\text{tr}X)$. Also, $E(g,s)$ is the standard Eisenstein series defined on the group $GL_2({\bf A})$, and $U_2$ is the trivial group. Unfolding this integral one obtains 
\begin{equation}\label{ext2}
\int\limits_{Z({\bf A})N({\bf A})\backslash GL_2({\bf A})}
L_{\pi_1}^{R_1}(g)f(g,s)dg\notag
\end{equation}
where
$$L_{\pi_1}^{R_1}(g_1)=\int_{\bf A}W_{\pi_1}(k(r_1)g_1)dr_1$$ Here $W_{\pi_1}$ is the Whittaker coefficient of $\pi_1$, and $k(r_1)=I_4+r_1e_{2,3}$ where $e_{2,3}$ is the matrix of size four which has one at the $(2,3)$ entry and zero elsewhere.

\begin{definition}\label{def1}
In the above notations, if, for $\text{Re}(s)$ large, after an
unfolding process, integral \eqref{global1} is equal to integral
\eqref{global2}, then  we refer to integral \eqref{global1} as to a
unipotent global integral. The number $l$ is the length of the
integral, and if all the functionals $L^{R_i}_{\pi_i}$ and
$L^{R_l}_\tau$ are factorizable, then we say that integral
\eqref{global1} is an Eulerian unipotent integral.
\end{definition}

In \cite{G1} we give a general overview about the motivation for the
above construction and definition. We also motivate some of the
discussion below.  As explained in \cite{G1} and also in \cite{G3},
we are mainly interested in Eulerian unipotent integrals which
satisfies the dimension equation
\begin{equation}\label{dim1}
\sum_{i=1}^{l}(\text{dim}\ \pi_i - \text{dim}\ U({\mathcal
O}_{G_i}))\  = \text{dim}\ G - \text{dim}\ Z
\end{equation}
where we write $\pi_l$ for the Eisenstein series $E_\tau$. Our goal
is to study the following
\begin{problem}\label{prob1}
Classify all Eulerian unipotent integrals which satisfies the
dimension equation \eqref{dim1}.
\end{problem}
In this paper we study the above problem for the group $G=GL_m$.
Moreover, we consider those cases where the center $Z$ of $GL_m$
coincides with the center of each of the groups $G_i$. This
restricts the choice of the  groups $G_i$, and also the relevant
unipotent orbits ${\mathcal O}_{G_i}$. We classify these cases in
the next section.

Thus, for these cases we study the following
\begin{conjecture}\label{conj1}
Assume that $G=GL_m$. Suppose that the integral \eqref{global1} is a
nonzero Eulerian unipotent integral which satisfies the dimension
equation \eqref{dim1}. Assume that the representations $\sigma_i$ generated by the Fourier coefficient
$\varphi_{\pi_i}^{U_i,\psi_{U_i}}(g)$ are not a one dimensional representation of $G({\bf A})$.  Then the length $l$ of the integral \eqref{global1}  is less than or equal to three.
\end{conjecture}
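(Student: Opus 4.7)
The plan is to convert the dimension equation \eqref{dim1} into a linear constraint on $l$, then dispose of the remaining cases by a direct analysis of the Tables in Section 4 together with the vanishing statement of Lemma \ref{lem1}.

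The first step is to establish a per-summand lower bound. The assumption that $\sigma_i$ is not a one-dimensional representation of $GL_m({\bf A})$ means that $\mathcal{O}_{GL_m}(\sigma_i)$ strictly dominates the trivial partition $[1^m]$; the smallest partition of $m$ above $[1^m]$ is $[2,1^{m-2}]$, whose nilpotent orbit has dimension $2(m-1)$, and hence $\text{dim}\ \sigma_i \ge m-1$. Combining this with the general inequality $\text{dim}\ \pi_i - \text{dim}\ U(\mathcal{O}_{G_i}) \ge \text{dim}\ \sigma_i$, which reflects the fact that the Fourier coefficient along $U(\mathcal{O}_{G_i})$ cannot raise the Gelfand--Kirillov dimension of what it produces on $G$ (see the discussion in \cite{G1}, \cite{G2}), yields
$$\text{dim}\ \pi_i - \text{dim}\ U(\mathcal{O}_{G_i}) \ge m-1 \qquad (1 \le i \le l).$$
Summing over $i$ and invoking \eqref{dim1} gives
$$l(m-1) \;\le\; \text{dim}\ G - \text{dim}\ Z \;=\; m^2 - 1,$$
so $l \le m+1$.

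For $m=2$ this already gives $l \le 3$, which settles the conjecture. For $m=3$ the bound is only $l \le 4$, so one must rule out $l=4$ separately. A length-four integral forces every summand to saturate the lower bound, meaning that every $\sigma_i$ sits in the minimal orbit $[2,1]$ and every quantity $\text{dim}\ \pi_i - \text{dim}\ U(\mathcal{O}_{G_i})$ equals $2$. Combined with the classification of admissible Fourier coefficients in Section 3, the enumeration in Tables 3--7 then leaves only a short finite list of candidate configurations for $(G_i,\mathcal{O}_{G_i})_{i=1}^{4}$, which can be examined one by one. This is where the real work is concentrated.

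The main obstacle is that at least one of the surviving length-four configurations forces some $\pi_i$ to be a cuspidal representation of the exceptional group $E_6$, and its elimination hinges on the non-trivial vanishing statement recorded as Lemma \ref{lem1}. Its proof, deferred to the final section, proceeds by exhibiting a sequence of Fourier expansions on $E_6$ that ultimately express the offending coefficient as a sum of Whittaker-type integrals along orbits which cannot support a nonzero cuspidal contribution, by virtue of the known constraints on $\mathcal{O}_{E_6}(\pi)$ for cuspidal $\pi$. For $m \ge 4$ the same two-step scheme should apply in principle, but to push it through one would need both a sharper per-summand lower bound (going beyond the minimal orbit) and an analogue of Lemma \ref{lem1} valid for a wider class of exceptional and classical groups; producing such a uniform vanishing result is the principal difficulty I anticipate in establishing the conjecture in full generality.
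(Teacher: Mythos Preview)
Your overall strategy matches the paper's: bound each summand in \eqref{dim1} from below, conclude $l\le m+1$, then kill the boundary case using cuspidality of $\pi_1$ together with Lemma~\ref{lem1}. But the argument as written has a genuine gap and one imprecise step.

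The gap is the inequality
\[
\text{dim}\,\pi_i - \text{dim}\,U(\mathcal{O}_{G_i}) \;\ge\; \text{dim}\,\sigma_i,
\]
which you invoke with only a gesture toward \cite{G1},\cite{G2}. Nothing in those references, or in this paper, establishes such a statement in the needed generality; it is a plausible Gelfand--Kirillov heuristic, not a theorem. The paper does \emph{not} go through $\sigma_i$ at all. Instead it obtains the per-summand bound by direct inspection of the unipotent-orbit lattice in each ambient group $G_i$: for $GL_{3k}$ the minimal orbit strictly above $(k^3)$ is $((k+1)k(k-1))$, and one computes $\tfrac12(\text{dim}\,((k+1)k(k-1))-\text{dim}\,(k^3))=2$; for $GE_6$ the minimal orbit above $D_4$ is $D_5(a_1)$, again with gap $2$. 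This is how the bound $\ge m-1$ is actually proved for $m=2,3$, and you should replace your $\sigma_i$ step with this explicit computation.

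The $l=4$ elimination for $m=3$ is also imprecise. Tables 3--7 list only $l\le 3$ configurations, so you cannot appeal to them. The paper's argument is cleaner than what you sketch: saturation forces the summand for the cuspidal $\pi_1$ to equal $2$, but if $\pi_1$ is cuspidal on $GL_{3k}$ then it is generic and the summand is at least $3$ (in fact equal to $3$ or $6$ by Proposition~\ref{prop2}), while if $\pi_1$ is cuspidal on $GE_6$ then a summand of $2$ would force $\mathcal{O}_{GE_6}(\pi_1)=D_5(a_1)$, which Lemma~\ref{lem1} rules out. You have the $GE_6$ half of this, but the $GL_{3k}$ half (genericity of cuspidals) is missing from your outline and is equally essential.
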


It is easy to construct examples with $l=3$. The Rankin product
integral given by
$$\int\limits_{Z({\bf A})G(F)\backslash G({\bf
A})}\varphi_{\pi_1}(g)\varphi_{\pi_2}(g)E(g,s)dg$$ is an Eulerian
unipotent integral. Here $\pi_i$ are irreducible cuspidal
representations of $G({\bf A})$, and $E(g,s)$ is a suitable
Eisenstein series defined on this group. We also mention that in
\cite{G3} there is a classification of all integrals as above, when
now $\pi_2$ is an arbitrary automorphic representation, and $E(g,s)$
is an arbitrary degenerate Eisenstein series.

Notice that since the group $G=GL_m$ contains a nontrivial unipotent subgroup , then
$\text{dim}\ \pi_i > \text{dim}\ U({\mathcal O}_{G_i})$. This follows from our assumption about the groups $\sigma_i$. Indeed,
since we assume that $\pi_i$ has a nonzero Fourier coefficient
corresponding to the unipotent orbit ${\mathcal O}_{G_i}$, then
clearly $\text{dim}\ \pi_i \ge \text{dim}\ U({\mathcal O}_{G_i})$.
If there is an equality, then we have $ {\mathcal
O}_{G_i}(\pi_i)={\mathcal O}_{G_i}$.  This means that the representation $\sigma_i$ is a one
dimensional representation of the group $G({\bf A})$. To see this, let
$x_\alpha(r)$ denote the one dimensional unipotent subgroup of
$G=GL_m$ which corresponds to some simple root of this group. Then
we consider the integral
$$\int\limits_{F\backslash {\bf A}}\varphi_{\pi_i}^{U_i,\psi_{U_i}}
(x_\alpha(r)g)\psi(r)dr$$ It is not hard to show that this Fourier
coefficient corresponds to a unipotent orbit which is strictly
greater than ${\mathcal O}_{G_i}$. In section 7 we will prove this
when $m=2$. The general case is similar. So, if we assume that $
{\mathcal O}_{G_i}(\pi_i)={\mathcal O}_{G_i}$, then we conclude that
the above integral must be zero for all choice of data. But then, if
we consider the group $SL_2$ which is generated by $x_{\pm
\alpha}$, we deduce that
$\varphi_{\pi_i}^{U_i,\psi_{U_i}}(mg)=\varphi_{\pi_i}^{U_i,\psi_{U_i}}(g)$
for all $m\in SL_2({\bf A})$. This implies that as a representation of
$GL_m({\bf A})$, the representation $\sigma_i$
is a one dimensional representation. Since, see conjecture
\ref{conj1}, we assume that this is not the case, we deduce that
$\text{dim}\ \pi_i > \text{dim}\ U({\mathcal O}_{G_i})$.

Notice that this proves that conjecture \ref{conj1} hold when $G$ is
a group of type $A_1$. Indeed, in this case $\text{dim}\ G -
\text{dim}\ Z= 3$ and hence $l\le 3$.

A few general remarks. First, consider the case when  $l=1$. Since
we want the global integral to depend on a complex variable  then,
excluding the simple cases of Hecke type integrals, we assume that
the only representation appearing in integral \eqref{global1}, is
an Eisenstein series. Since we require that the integral converges,
then  we must take $G$ to be trivial. Hence integral \eqref{global1}
reduces to the Fourier coefficient $E_\tau^{U_l,\psi_{U_l}}(g_l,s)$.
These type of integrals were studied in various references, see
\cite{S}, and are known as the Langlands- Shahidi type integrals.
Henceforth, we consider integrals as integral \eqref{global1} such
that $l\ge 2$.

As a second remark, we  mention that the integrals of the form
\eqref{global1} are not the only known Eulerian unipotent integrals.
An extension of these integrals is when the unipotent groups are
defined by a diagonal embedding. For example it is not hard to show
that for a suitable Eisenstein series on $GL_4({\bf A})$, the
integral
\begin{equation}\label{example35}
\int\varphi_{\sigma}(h)
E_{\tau_1}\left ( \begin{pmatrix} I&X\\ &I\end{pmatrix}\begin{pmatrix} h&\\
&h\end{pmatrix},s_1\right )E_{\tau_2}\left ( \begin{pmatrix} I&X\\ &I\end{pmatrix}\begin{pmatrix} h&\\
&h\end{pmatrix},s_2\right ) \psi(\text{tr}X)dXdh\notag
\end{equation}
is an Eulerian unipotent integral. Here $\sigma$ is an irreducible
cuspidal representation of $GL_2({\bf A})$.  This issue is discussed
in  \cite{G1}.

\section{\bf The Relevant Unipotent Orbits}

In this section we classify the relevant unipotent orbits whose
stabilizer is the group $GL_m$. We only consider those unipotent
orbits such that the center $Z$ of $GL_m$ coincides with the center
of $G_i$. We describe the orbits, the corresponding unipotent
groups, and their characters. We also collect some information
needed later.

{\bf 1)}\ Consider the group $GL_{km}$ with $k,m\ge 2$, and denote
${\mathcal O}=(k^m)$. The corresponding unipotent groups are
$$U_{k,m}({\mathcal O})=\left \{ \begin{pmatrix} I&X_1&*&*&*\\
&I&X_2&*&*\\ &&I&*&*\\
&&&\ddots&X_{k-1}\\ &&&&I\end{pmatrix},\ \ X_j\in Mat_{m\times m}
\right \}$$ The character is given by
$$\psi_{U_{k,m}({\mathcal O})}(u)=\psi(\text{tr}(X_1+X_2+\cdots
+X_{k-1}))$$ The stabilizer of $\psi_{U_{k,m}({\mathcal O})}$ inside
the group $GL_m\times\cdots\times GL_m$, counted $k$ times, is the
diagonal embedding of $GL_m$ which we denote by $GL_m^\Delta$.
Notice that the center of the stabilizer coincide with the center of
$GL_{km}$.

For the classical groups, it follows from \cite{C-M} page 88 , and
for the exceptional groups it follows from \cite{C} page 400 , that
if $m\ge 4$, then the above cases are the only cases where the
stabilizer is $GL_m$ whose center $Z$ is the center of the group
$G_i$.

{\bf 2)}\ In case when $m=3$, beside the cases discussed in {\bf
1)}\ there is another case in the exceptional groups. Let $GE_6$
denote the similitude group of the exceptional group $E_6$. We can
realize this group as a subgroup of $E_8$. We will use the notations
from \cite{G4}. It follows from \cite{C} page 402 that the unipotent
orbit whose label is $D_4$, its stabilizer inside a suitable Levi
subgroup is the group of type $A_2$. In \cite{G4} page 106, one
defines the unipotent group $U$ with character $\psi_U$ whose
stabilizer is the group $GL_3$. As explained in that reference the
embedding of $GL_3$ is such that its center $Z$ is the center of
$GE_6$.

{\bf 3)}\ Finally we consider the case of $m=2$, that is the  group
$GL_2$. Beside the cases in {\bf 1)}\ we have other cases  in other
groups. Let $GSp_{2(2n+1)}$ denote the similitude group of the
symplectic group $Sp_{2(2n+1)}$. In matrices we take the symplectic
form $\begin{pmatrix} &J_{2n+1}\\ -J_{2n+1}& \end{pmatrix}$ where
$J_{2n+1}$ is the $2n+1$ matrix with ones on the other diagonal and
zeros elsewhere. Let ${\mathcal O}=((2n+1)^2)$, and let
$U_n({\mathcal O})$ denote the standard unipotent radical subgroup
of the parabolic subgroup of $GSp_{2(2n+1)}$, whose Levi part is
$GL_2\times\cdots\times GL_2$ counted $n$ times. We take
$U_n({\mathcal O})$ to consist of upper unipotent matrices. Identify
the quotient $U_n({\mathcal O})/[U_n({\mathcal O}),U_n({\mathcal
O})]$ with $L=Mat_2\oplus\cdots\oplus Mat_2$, counted $n$ times.
Then the group $GL_2\times\cdots\times GL_2$ acts on $U_n({\mathcal
O})/[U_n({\mathcal O}),U_n({\mathcal O})]$ by conjugation, and
there is an open orbit for this action. For $(X_1, X_2,\ldots ,
X_n)\in L$ define the character $\psi(\text{tr}\ (X_1+X_2+\cdots
+X_n))$, and extend it trivially to a character $\psi_{U_n({\mathcal
O})}$ of $U_n({\mathcal O})(F)\backslash U_n({\mathcal O})({\bf
A})$. The stabilizer of $\psi_{U_n({\mathcal O})}$ inside
$GL_2\times\cdots\times GL_2$ is $GL_2$ embedded diagonally. Its not
hard to check that the center of $GL_2$ coincides with the center of
$GSp_{2(2n+1)}$.

A similar situation occurs in the similitude group $GSO_{4n}$. Let
${\mathcal O}=((2n)^2)$. Let $U_n({\mathcal O})$ denote the standard
unipotent radical of the parabolic subgroup whose Levi part is
$GL_2\times\cdots\times GL_2$ counted $n$ times. Identify
$U_n({\mathcal O})/[U_n({\mathcal O}),U_n({\mathcal O})]$ with
$L=Mat_2\oplus\cdots\oplus Mat_2\oplus Mat_2^0$, where $Mat_2$
appears $n$ times and $Mat_2^0=\{ \begin{pmatrix} y&\\
&-y\end{pmatrix}\ : y\in {\bf A}\}$. Then the group
$GL_2\times\cdots\times GL_2$ acts on $U_n({\mathcal
O})/[U_n({\mathcal O}),U_n({\mathcal O})]$ by conjugation, and
there is an open orbit for this action. For $(X_1, X_2,\ldots ,
X_{n-1},y)\in L$ define the character $\psi(\text{tr}\
(X_1+X_2+\cdots +X_{n-1})+y)$, and extend it trivially to a
character $\psi_{U_n({\mathcal O})}$ of $U_n({\mathcal
O})(F)\backslash U_n({\mathcal O})({\bf A})$. The stabilizer of
$\psi_{U_n({\mathcal O})}$ inside $GL_2\times\cdots\times GL_2$ is
$GL_2$ embedded diagonally.

Finally, consider the similitude group $GE_7$. We use the notations
as in \cite{G4}. In the notations of \cite{C}, let ${\mathcal
O}=E_6$. Let $U({\mathcal O})$ denote the unipotent radical subgroup
of the parabolic subgroup whose Levi part contains the group
$GL_2\times GL_2\times GL_2$ embedded in $GE_7$ as follows. In terms of the roots of $E_7$, the
group $GL_2\times GL_2\times GL_2$ contains the group $SL_2\times
SL_2\times SL_2$ generated by $x_{\pm 0100000};\ x_{\pm 0000100};\
x_{\pm 0000001}$. Thus $\text{dim}\ U({\mathcal O})=60$. Define a
character $\psi_{U({\mathcal O})}$ of the group $U({\mathcal
O})(F)\backslash U({\mathcal O})({\bf A})$ as follows. For $u\in
U({\mathcal O})$ write
$$u=x_{1000000}(r_1)x_{0010000}(r_2)x_{0101000}(r_3)x_{0001100}(r_4)
x_{0000110}(r_5)x_{0000011}(r_6)u'$$ Here $u'$ is an element of
$U({\mathcal O})$ which is a product of one parameter unipotent
subgroups none of which are among the above six roots. Define
$\psi_{U({\mathcal O})}(u)=\psi(r_1+r_2+\cdots +r_6)$. Then the
stabilizer of $\psi_{U({\mathcal O})}$ inside $GL_2\times GL_2\times
GL_2$ is the diagonal group $GL_2$. It follows from \cite{G4} that
the center of this copy of $GL_2$ is the center of $GE_7$.

\section{\bf On Some Global  Integrals}

In this section we study global integrals of the type
\eqref{global1} which we assume to satisfy  the dimension equation
\eqref{dim1}.  The result we obtain in this section is a set of
integrals which satisfies the dimension equation. We emphasize that
the global integrals we obtain  are not necessarily nonzero or that
they are an Eulerian unipotent integrals. We consider these issues
for some cases, in the next sections.

We recall from section 2 that we assume that the representation
$\pi_1$ is an irreducible cuspidal representation of the group
$G_1({\bf A})$, and that the representation $\pi_l$ is an Eisenstein
series. From section 3 we deduce that the group $G_1$ is one of the following groups. First,  $G_1=GL_{k_1m}$ with $k_1\ge
1$ and $m\ge 2$. In addition, if $m=3$, then we can also have
$G_1=GE_6$, and if $m=2$, then we also have that $G_1$ is one of the
groups $GSp_{2(n+1)}, GSO_{4n}$ or $GE_7$. In the first case we will
write $k$ for $k_1$. Also, we denote $G=GL_m$, and by $Z$ the center
of $G$. In the first subsection we deal with the group $G=GL_2$,
then with $G=GL_3$ and in the last subsection we consider the group
$G=GL_m$ for $m\ge 4$.

\subsection{\bf The case when $G=GL_2$}

Assume that $G=GL_2$. The dimension equation is then
\begin{equation}
\sum_{i=1}^{l}(\text{dim}\ \pi_i - \text{dim}\ U({\mathcal
O}_{G_i}))\ = 3\notag
\end{equation}
It follows from the dimension equation that we have $l=2,3$. We will
use the notation $\lambda_H$ for the partition $\lambda$ of the
classical group $H$.

In  Table 1 we consider all integrals \eqref{global1} with $l=2$.
There are two cases to consider. In the first case we have
$\text{dim}\ \pi_1 - \text{dim}\ U({\mathcal O}_{G_1})=2$ and
$\text{dim}\ E_\tau - \text{dim}\ U({\mathcal O}_{G_2})=1$, and in
the second case we have  $\text{dim}\ \pi_1 - \text{dim}\
U({\mathcal O}_{G_1})=1$ and $\text{dim}\ E_\tau - \text{dim}\
U({\mathcal O}_{G_2})=2$. The first case is listed in the second
column of Table 1, and the second case in the third column of Table
1.

Consider the first case. If $\pi_1$ is defined on $GL_{2k}({\bf
A})$, then we have
$$2=\text{dim}\ \pi_1-\text{dim}\ U({\mathcal O}_{G_1})=
\frac{1}{2}(\text{dim}\ {\mathcal O}_{GL}(\pi_1)-\text{dim} (k^2)_{GL})$$ The
only partition that satisfies this equation is ${\mathcal O}_{GL}(\pi_1)=((k+2)(k-2))_{GL}$.
But $\pi_1$ is cuspidal, and hence generic. The only cuspidal
representation $\pi_1$ such that ${\mathcal
O}_{GL}(\pi_1)=((k+2)(k-2))_{GL}$ is when   $k=2$, and we obtain
${\mathcal O}_{GL}(\pi_1)=(4)_{GL}$. As another example from this
case, assume that $\pi_1$ is defined on $GSp_{2(2n+1)}({\bf A})$. In
this case the only partition which satisfies
$$2=\text{dim}\ \pi_1-\text{dim}\ U({\mathcal O}_{G_1})=
\frac{1}{2}(\text{dim}\ {\mathcal O}_{GSp}(\pi_1)-\text{dim} ((2n+1)^2)_{GSp})$$
is the partition ${\mathcal O}_{GSp}(\pi_1)=((2n+4)(2n-2))_{GSp}$. In contrast to the case
when $\pi_1$ was defined on $GL_{2k}$, in this case cuspidal
representations of $GSp_{2(2n+1)}({\bf A})$ which satisfy ${\mathcal
O}(\pi_1)=((2n+4)(2n-2))_{GSp}$ do exist. These are some CAP
representations, which can be constructed, for example, using the
method described in \cite{G5}. The other two cases, which appear in the second column of Table 1 are constructed in a similar way. 

As for the Eisenstein series, the situation is similar. In this case we have $\text{dim}\ E_\tau -\text{dim}\ U({\mathcal O}_{G_2})=1$. If, for example, $E_\tau$ is defined on the group $GSO_{4p}({\bf A})$, then the only partition that satisfies this last equation is $((2p+1)(2p-1))$. This explains the last entry in the second column of Table 1.

\begin{table}
\begin{center}\begin{tabular}{ | l | r | r | r |}
\hline         & $(4)_{GL}$ \ \ \ \ \ \ \ \ \ \  &   $(2)_{GL}$ \ \ \ \ \ \ \ \ \ \   \\
       ${\mathcal O}(\pi_1)$ & $((2n+4)(2n-2))_{GSp}$  & $((2n+2)(2n))_{GSp}$   \\
                        & $((2n+3)(2n-3))_{GSO}$   &   $((2n+1)(2n-1))_{GSO}$  \\
                        &   $E_7(a_1)$ \ \ \ \ \ \ \ \ \ \          &   $E_7(a_2)$ \ \ \ \ \ \ \ \ \ \  \\    \hline
                        &  $((p+1)(p-1))_{GL}$  &  $((p+2)(p-2))_{GL}$ \\
${\mathcal O}(E_\tau)$  &  $((2p+2)(2p))_{GSp}$  &  $((2p+4)(2p-2))_{GSp}$\\
                        &  $((2p+1)(2p-1))_{GSO}$  &  $((2p+3)(2p-3))_{GSO}$\\
                        &     $E_7(a_2)$ \ \ \ \ \ \ \ \ \ \ &   $E_7(a_1)$\ \ \ \ \ \ \ \ \ \ \\ \hline

\end{tabular}
\bigskip  \caption{} \label{ta1}
\end{center}
\end{table}

\begin{table}
\begin{center}\begin{tabular}{ | l | r | r | r |}
\hline       &$(2)_{GL}$ \ \ \ \ \ \ \ \ \ \ \\
${\mathcal O}(\pi_1)$ & $((2n+2)(2n))_{GSp}$  \\
                      & $((2n+1)(2n-1))_{GSO}$  \\
                      & $E_7(a_2)$ \ \ \ \ \ \ \ \ \ \  \\ \hline
                      &  $((q+1)(q-1))_{GL}$ \\
${\mathcal O}(\pi_2)$  &  $((2q+2)(2q))_{GSp}$ \\
                       &  $((2q+1)(2q-1))_{GSO}$ \\
                       &     $E_7(a_2)$ \ \ \ \ \ \ \ \ \ \ \\ \hline
                       &  $((p+1)(p-1))_{GL}$ \\
${\mathcal O}(E_\tau)$  &  $((2p+2)(2p))_{GSp}$ \\
                       &  $((2p+1)(2p-1))_{GSO}$ \\
                       &     $E_7(a_2)$ \ \ \ \ \ \ \ \ \ \ \\
                       \hline

\end{tabular}
\bigskip  \caption{} \label{ta2}
\end{center}
\end{table}

When $l=3$ we have
$$\text{dim}\ \pi_1-\text{dim}\ U({\mathcal O}_{G_1})=
\text{dim}\ \pi_2-\text{dim}\ U({\mathcal O}_{G_2})= \text{dim}\
E_\tau-\text{dim}\ U({\mathcal O}_{G_3})=1$$ In Table 2 we list all
relevant possible cases.

As an example, consider the case when all three representations are
defined on the exceptional group $E_7({\bf A})$. Since, see \cite{C-M},  $E_7(a_2)$ is
the only unipotent orbit which satisfies
$$1=\text{dim}\ \pi_1-\text{dim}\ U({\mathcal O}_{G_1})=
\frac{1}{2}(\text{dim}\ {\mathcal O}_{GE_7}(\pi_1)-\text{dim} E_6)$$ this
explains the relevant entry in the second table. The entries for
$\pi_2$ and $E_\tau$ are the same. Thus, for a corresponding integral to
exist, we need to prove that there is a cuspidal representation
$\pi_1$, and an Eisenstein series $E_\tau$, both defined on
$GE_7({\bf A})$ such that ${\mathcal O}(\pi_1)={\mathcal O}(E_\tau)=
E_7(a_2)$.

We emphasize that the entries in each table are independent. For
example, in Table 2 there are $4^3=64$ cases to consider. This means that using the data from Table 2, we can construct 64 type of integrals, as defined by integral \eqref{global1}. Clearly, we still need to check which of these integral is
well defined, and which is a nonzero Eulerian unipotent integral.

\subsection{\bf The case when $G=GL_3$}

Assume that $G=GL_3$. In this section all partitions are partitions
of the group $GL$. It follows from section 3 that $\pi_1$ is defined
on $GL_{3k}$ for some $k\ge 1$, or defined on the group $GE_6({\bf
A})$.

In this case, the dimension equation is given by
\begin{equation}
\sum_{i=1}^{l}(\text{dim}\ \pi_i - \text{dim}\ U({\mathcal
O}_{G_i}))\ = 8\notag
\end{equation}
First, we claim that for any irreducible representation $\pi$ of the
group $H=GL_{3k}({\bf A})$ such that ${\mathcal O}(\pi)> (k^3)$, or
for the group $H=GE_6({\bf A})$ such that ${\mathcal O}(\pi)> E_6$, we have
$\text{dim}\ \pi - \text{dim}\ U({\mathcal O}_{H}))\ge 2$. Indeed,
if $H=GL_{3k}$, then  $\text{dim}\ \pi - \text{dim}\ U({\mathcal
O}_{H})=\frac{1}{2}( {\mathcal O}(\pi)- \text{dim}\ (k^3))$. The first
partition which strictly greater than $(k^3)$  is $((k+1)k(k-1))$.
Hence $\text{dim}\ \pi - \text{dim}\ U({\mathcal O}_{H})\ge
\frac{1}{2}(\text{dim}\ (k+1)k(k-1))-\text{dim}\ (k^3))=2$, and the
claim follows. In the case of $H=GE_6$, we note that the first
partition which is strictly greater than $D_4$ is $D_5(a_1)$, and it
satisfies $\frac{1}{2}(\text{dim}\ D_5(a_1) - \text{dim}\ D_4)=2$.
See \cite{C-M}.

We need the following  result which we shall prove in the last section.
\begin{lemma}\label{lem1}
There are no irreducible nonzero cuspidal representations $\pi$ defined on
$GE_6({\bf A})$, such that ${\mathcal O}_{GE_6}(\pi)$ is equal to
$D_5$, or to $D_5(a_1)$.
\end{lemma}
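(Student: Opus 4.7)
The plan is to prove both non-existence statements by the standard method of Ginzburg's unipotent-orbit calculus: assuming $\pi$ is cuspidal on $GE_6(\mathbf{A})$ with $\mathcal{O}_{GE_6}(\pi)$ equal to $D_5$ (resp.\ $D_5(a_1)$), start from a Fourier coefficient attached to this orbit and use Fourier expansion combined with root exchange to identify it with a constant term of $\pi$ (possibly twisted by further Fourier characters) along the unipotent radical of a proper parabolic subgroup of $GE_6$. Cuspidality forces this constant term to vanish, contradicting the assumption that the orbit lies in $\mathcal{O}_{GE_6}(\pi)$.

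For the orbit $D_5$, I would first fix the unipotent group $U = U({\mathcal O})$ and character $\psi_U$ attached to the orbit by the recipe of \cite{G2}, then select a unipotent subgroup $V$ of $GE_6$ built from roots lying outside $U$, chosen so that $V$ together with part of $U$ decomposes into two unipotent subgroups $V_1, V_2$ in abelian position admitting a non-degenerate pairing compatible with $\psi_U$. Expanding $\varphi_\pi^{U,\psi_U}$ along $V_1(F)\backslash V_1(\mathbf{A})$, each non-trivial character should produce a Fourier coefficient whose corresponding unipotent orbit strictly dominates $D_5$ in the closure order of $E_6$-orbits (recorded in \cite{C} and \cite{C-M}). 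By the assumption $\mathcal{O}_{GE_6}(\pi) = D_5$ all such contributions vanish, leaving only the trivial-character term. A root exchange then converts the surviving term into a constant term of (a further Fourier coefficient of) $\pi$ along a proper parabolic, which vanishes by cuspidality.

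The orbit $D_5(a_1)$ is handled by the same template, but additional care is required because there is an intermediate orbit, $E_6(a_3)$, lying strictly between $D_5(a_1)$ and $D_5$ in the closure order. One must therefore ensure that the Fourier expansion does not spawn a Fourier coefficient attached to $E_6(a_3)$ whose vanishing is not immediate; the remedy is either to iterate the expansion, or to choose $V_1, V_2$ so that every non-trivial character produced is shown to lie on an orbit strictly greater than $D_5(a_1)$, which is then forced to vanish by the hypothesis on $\mathcal{O}_{GE_6}(\pi)$. After this extra bookkeeping the same reduction to a constant term of $\pi$ along a proper parabolic goes through, and cuspidality yields the contradiction.

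The main obstacle is the combinatorial step: identifying the correct root subgroups $V_1, V_2$ in each case, verifying via the explicit $E_6$ root data and the orbit closure relations of \cite{C}, \cite{C-M} that every non-trivial Fourier character produced lands on an orbit strictly above $D_5$ (resp.\ $D_5(a_1)$), and executing the root exchange cleanly so that the resulting integral is manifestly a constant term along a specific proper parabolic. The $D_5(a_1)$ case is the more delicate of the two, precisely because a single Fourier expansion is unlikely to suffice and the intermediate orbit $E_6(a_3)$ must be controlled.
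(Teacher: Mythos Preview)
Your plan is correct and is precisely the paper's approach: Fourier expansion plus root exchange, with the non-trivial characters producing coefficients attached to orbits strictly above the assumed one (hence vanishing by hypothesis), and the surviving constant-term piece becoming a constant term of $\pi$ along the unipotent radical of a proper parabolic, killed by cuspidality. The paper executes this explicitly for $D_5$: it first conjugates the $D_5$ coefficient by a specific Weyl element $w_0=w_6w_5w_4w_3w_2w_4w_5w_1w_3$ so that the integral involves the Whittaker character on a $D_5$-type maximal unipotent together with auxiliary groups $V^\pm$, then runs a sequence of one-variable expansions and root exchanges; the non-trivial terms encountered are coefficients for $E_6(a_1)$ and $E_6$, and the final trivial term is the constant term along the unipotent radical of the parabolic with Levi containing $Spin_{10}$.

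One correction: your concern about $E_6(a_3)$ in the $D_5(a_1)$ case is misplaced. Since $E_6(a_3)$ lies strictly above $D_5(a_1)$ in the closure order, any Fourier coefficient attached to it \emph{does} vanish immediately by the hypothesis $\mathcal{O}_{GE_6}(\pi)=D_5(a_1)$; that is exactly what the definition of $\mathcal{O}(\pi)$ guarantees. The paper in fact treats $D_5(a_1)$ as entirely parallel to $D_5$ and only sketches it (with a different Weyl element $w_0=w_6w_5w_4w_3w_2w_4w_5w_1$). The presence of more orbits above $D_5(a_1)$ may mean more non-trivial-character terms to discard along the way, but none of them is any harder to discard.
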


Next we claim that if $\pi$ is an irreducible cuspidal representation of
$H=GL_{3k}({\bf A})$ or of the group $H=GE_6({\bf A})$ such that
${\mathcal O}(\pi)> E_6$, then we have $\text{dim}\ \pi -
\text{dim}\ U({\mathcal O}_{H})\ge 3$. For the exceptional group
this claim follows from lemma \ref{lem1}. For the group $GL_{3k}$,
it follows from the fact that every cuspidal representation is
generic.

Returning to the global integral \eqref{global1} which satisfies the
dimension equation, we assume first that $\pi_1$ is a cuspidal
representation of the group $GL_{3k}({\bf A})$. It follows from
proposition \ref{prop2}, which we state and prove in the next
subsection, that $k=1,2$. Assume first that $k=2$. Then $\pi_1$ is
defined on $GL_6({\bf A})$, and hence ${\mathcal O}(\pi_1)=(6)$.
Hence $\text{dim}\ \pi_1 - \text{dim}\ U({\mathcal O}_{G_1})=6$.
Thus, we deduce that $l=2$, and $\text{dim}\ E_\tau - \text{dim}\
U({\mathcal O}_{G_2})=2$. From this we obtain Table 3.

Next consider the case when $k=1$. This means that $\pi_1$ is a
cuspidal representation of $GL_3({\bf A})$. Hence its dimension is
3, and we obtain
\begin{equation}
\sum_{i=2}^{l}(\text{dim}\ \pi_i - \text{dim}\ U({\mathcal
O}_{G_i}))\ = 5\notag
\end{equation}
Since we proved that for any representation $\pi$ as above
$\text{dim}\ \pi - \text{dim}\ U({\mathcal O}_{H})\ge 2$ then
$l=2,3$. If $l=2$, then $\text{dim}\ E_\tau - \text{dim}\
U({\mathcal O}_{G_2})=5$. We then obtain Table 4.

Assume that $l=3$. Thus $\text{dim}\ E_\tau - \text{dim}\
U({\mathcal O}_{G_3})=2,3$. Assume first that it is equal to three.
Then $E_\tau$ is defined either on $GL_{3p}({\bf A})$ for some $p\ge
1$, and satisfies ${\mathcal O}(E_\tau)=((p+1)^2(p-2)),\
((p+2)(p-1)^2)$, or $E_\tau$ is defined on $GE_6({\bf A})$ and
satisfies ${\mathcal O}(E_\tau)=E_6(a_3)$. Since $l=3$, then we have
a third representation, denoted by $\pi_2$ which satisfies
$\text{dim}\ \pi_2 - \text{dim}\ U({\mathcal O}_{G_2})=2$. Hence,
the representation is defined either on $GL_{3q}({\bf A})$ for some
$q\ge 1$, and satisfies ${\mathcal O}(\pi_2)=((q+1)q(q-1))$ or
defined on $GE_6({\bf A})$ and satisfies ${\mathcal
O}(\pi_2)=D_5(a_1)$. From this we obtain the second column of Table
5.

\begin{table}
\begin{center}\begin{tabular}{ | l | r | r | r |}
\hline ${\mathcal O}(\pi_1)$ & $(6)$\ \ \ \ \ \ \ \ \ \    \\
\hline  & $((p+1)p(p-1))$   \\
               ${\mathcal O}(E_\tau)$ & $D_5(a_1)$ \ \ \ \ \ \ \  \\
                                 \hline
\end{tabular}
\bigskip  \caption{} \label{ta3}
\end{center}
\end{table}

\begin{table}
\begin{center}\begin{tabular}{ | l | r | r | r |}
\hline ${\mathcal O}(\pi_1)$ &   $(3)$\ \ \ \ \ \ \ \ \ \ \ \ \ \     \\
\hline  &   $((p+2)(p+1)(p-3))$  \\
               ${\mathcal O}(E_\tau)$ &   $((p+3)(p-1)(p-2))$  \\
                               &    $E_6(a_1)$\ \ \ \ \ \ \ \ \ \ \ \ \\  \hline
\end{tabular}
\bigskip  \caption{} \label{ta4}
\end{center}
\end{table}

\begin{table}
\begin{center}\begin{tabular}{ | l | r | r | r |}
\hline ${\mathcal O}(\pi_1)$ &  $(3)$\ \ \ \ \ \ \ \ \ &$(3)$\ \ \ \ \ \ \ \ \  \\
 \hline                        & $((q+1)q(q-1))$  &  $((q+1)^2(q-2))$ \\
  ${\mathcal O}(\pi_2)$    &  $D_5(a_1)$\ \ \ \ \ \ \      &   $((q+2)(q-1)^2)$ \\
                             &                &  $E_6(a_3)$\ \ \ \ \ \ \ \   \\
\hline                 & $((p+1)^2(p-2))$  &  $((p+1)p(p-1))$ \\
          ${\mathcal O}(E_\tau)$  &  $((p+2)(p-1)^2)$    &  $D_5(a_1)$\ \ \ \ \ \   \\
                              &  $E_6(a_3)$\ \ \ \ \ \ \  &\\ \hline

\end{tabular}
\bigskip  \caption{} \label{ta5}
\end{center}
\end{table}
The last column in Table 5  is obtained by interchanging the roles
of the representations $\pi_2$ and $E_\tau$. Thus, the third column
corresponds to the case when $\text{dim}\ E_\tau - \text{dim}\
U({\mathcal O}_{G_3})=2$.

The final case to consider in this subsection is when $\pi_1$
defines an irreducible cuspidal representation of $GE_6({\bf A})$.
Since $\varphi_{\pi_1}^{U_1,\psi_{U_1}}(g)$ is not zero, it follows
from section 3 that ${\mathcal O}_{GE_6}(\pi_1)>D_4$. Hence, the
possibilities for ${\mathcal O}_{GE_6}(\pi_1)$ are  $E_6, E_6(a_1),
D_5, E_6(a_3)$ or $D_5(a_1)$. However, the representation $\pi_1$ is
cuspidal. Hence, from lemma \ref{lem1} it follows that ${\mathcal
O}_{GE_6}(\pi_1)=E_6, E_6(a_1)$ or $E_6(a_3)$.

In all cases we have that $\text{dim}\ U({\mathcal O}_{G_1})=30$.
Hence, it follows from \cite{C-M} page 129 that $\text{dim}\ \pi_1 -
\text{dim}\ U({\mathcal O}_{G_1})= 6, 5, 3$. Thus, we have the
corresponding three possibilities
\begin{equation}
\sum_{i=2}^{l}(\text{dim}\ \pi_i - \text{dim}\ U({\mathcal
O}_{G_i})) =2, 3, 5\notag
\end{equation}
From the fact that $\text{dim}\ \pi_i - \text{dim}\ U({\mathcal
O}_{G_i})\ge 2$ we deduce that when ${\mathcal O}_{GE_6}(\pi_1)=E_6$
then we have $l=2$ and in the other two cases we have $l=2,3$. We
summarize all possible cases in Tables 6 and 7.
\begin{table}
\begin{center}\begin{tabular}{ | l | r | r | r |}
\hline ${\mathcal O}(\pi_1)$ & $E_6$\ \ \ \ \ \ \ \ \ \  &  $E_6(a_1)$\ \ \ \ \ \ \  & $E_6(a_3)$\ \ \ \ \ \ \ \ \ \    \\
\hline  & $((p+1)p(p-1))$ &  $((p+1)^2(p-2))$ &  $((p+2)(p+1)(p-3))$ \\
               ${\mathcal O}(E_\tau)$ & $D_5(a_1)$\ \ \ \ \ \  &  $((p+2)(p-1)^2)$ & $((p+3)(p-1)(p-2))$ \\
                               &   &  $E_6(a_3)$\ \ \ \ \ \  & $E_6(a_1)$\ \ \ \ \ \ \ \ \ \ \\  \hline
\end{tabular}
\bigskip  \caption{} \label{ta6}
\end{center}
\end{table}

\begin{table}

\begin{center}\begin{tabular}{ | l | r | r | r |}
\hline ${\mathcal O}(\pi_1)$ &  $E_6(a_3)$\ \ \ \ \ \ &$E_6(a_3)$\ \ \ \ \ \ \\
 \hline                        & $((q+1)q(q-1))$  &  $((q+1)^2(q-2))$ \\
  ${\mathcal O}(\pi_2)$    &  $D_5(a_1)$\ \ \ \ \  \   &   $((q+2)(q-1)^2)$ \\
                             &                &  $E_6(a_3)$\ \ \ \ \ \   \\
\hline                 & $((p+1)^2(p-2))$  &  $((p+1)p(p-1))$ \\
          ${\mathcal O}(E_\tau)$  &  $((p+2)(p-1)^2)$    &  $D_5(a_1)$\ \ \ \ \ \  \\

                              &  $E_6(a_3)$\ \ \ \ \ \ &\\ \hline

\end{tabular}
\bigskip  \caption{} \label{ta7}
\end{center}
\end{table}
For example, the last column in  Table 6 corresponds to the case
when $l=2$ and ${\mathcal O}(\pi_1)=E_6(a_3)$. In this case we have
$\text{dim}\ \pi_1 - \text{dim}\ U({\mathcal O}_{G_1})=3$, and since
$l=2$, we deduce that $\text{dim}\ E_\tau - \text{dim}\ U({\mathcal
O}_{G_2})=5$. If $E_\tau$ is defined on $GL_{3p}({\bf A})$ for some
$p\ge 1$, then the only options are ${\mathcal
O}(E_\tau)=((p+2)(p+1)(p-3))$ or ${\mathcal
O}(E_\tau)=((p+3)(p-1)(p-2))$. If $E_\tau$ is defined on $GE_6({\bf
A})$, then the only possibility is ${\mathcal O}(E_\tau)=E_6(a_1)$.

This completes the case when $G=GL_3$. Notice that we proved

\begin{proposition}\label{prop1}
Given a global integral of the form \eqref{global1}, where $G=GL_3$,
which satisfies the dimension equation \eqref{dim1}, then $l\le 3$.
\end{proposition}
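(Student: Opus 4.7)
The plan is to extract $l \leq 3$ directly from the dimension equation, using the lower bounds on the individual summands $\text{dim}\ \pi_i - \text{dim}\ U({\mathcal O}_{G_i})$ that have already been established earlier in this subsection. With $G = GL_3$, the right-hand side of \eqref{dim1} equals $\text{dim}\ GL_3 - \text{dim}\ Z = 8$, so the equation reads
\[
\sum_{i=1}^{l}\bigl(\text{dim}\ \pi_i - \text{dim}\ U({\mathcal O}_{G_i})\bigr) \;=\; 8.
\]

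First I would assemble the two relevant lower bounds. For each $i \geq 2$ the representation $\pi_i$ is automorphic (with $\pi_l = E_\tau$ Eisenstein), and the standing assumption that $\sigma_i$ is not one-dimensional forces ${\mathcal O}_{G_i}(\pi_i)$ to be strictly greater than ${\mathcal O}_{G_i}$. The comparisons carried out earlier in this subsection, between $(k^3)$ and $((k+1)k(k-1))$ in the $GL_{3k}$ case and between $D_4$ and $D_5(a_1)$ in the $GE_6$ case, then yield
\[
\text{dim}\ \pi_i - \text{dim}\ U({\mathcal O}_{G_i}) \;\geq\; 2 \qquad (i \geq 2).
\]
For the cuspidal first factor $\pi_1$ I would invoke the sharper bound $\text{dim}\ \pi_1 - \text{dim}\ U({\mathcal O}_{G_1}) \geq 3$. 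If $\pi_1$ is defined on $GL_{3k}({\bf A})$, cuspidality implies genericity, so ${\mathcal O}(\pi_1) = (3k)$ and a direct computation evaluates the difference as $3k \geq 3$. If $\pi_1$ is defined on $GE_6({\bf A})$, Lemma \ref{lem1} rules out the orbits $D_5$ and $D_5(a_1)$, leaving only $E_6$, $E_6(a_1)$ or $E_6(a_3)$, for which the difference equals $6$, $5$ or $3$ respectively.

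Feeding these bounds into the dimension equation then produces
\[
8 \;=\; \sum_{i=1}^{l}\bigl(\text{dim}\ \pi_i - \text{dim}\ U({\mathcal O}_{G_i})\bigr) \;\geq\; 3 + 2(l-1) \;=\; 2l + 1,
\]
so $l \leq 7/2$, and since $l$ is a positive integer we conclude $l \leq 3$.

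The main obstacle is really Lemma \ref{lem1}: without excluding the orbits $D_5$ and $D_5(a_1)$ for cuspidal representations of $GE_6({\bf A})$, the bound on $\pi_1$ would only degrade to $\geq 2$, and the argument above would only yield $l \leq 4$. The $GL_{3k}$ side of the cuspidal lower bound is immediate from genericity, so it is precisely the exceptional case that is delicate, and that delicate input is what Lemma \ref{lem1} supplies.
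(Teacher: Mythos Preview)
Your proof is correct and follows the same route as the paper: the two lower bounds you use, $\text{dim}\ \pi_i - \text{dim}\ U({\mathcal O}_{G_i})\ge 2$ for all $i$ and $\ge 3$ for the cuspidal $\pi_1$ (the latter resting on Lemma~\ref{lem1} in the $GE_6$ case), are exactly the inputs the paper establishes at the start of the subsection. The only difference is presentational---the paper reaches $l\le 3$ through the case-by-case analysis that simultaneously produces Tables~3--7, whereas you extract the inequality $8\ge 3+2(l-1)$ directly.
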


\subsection{\bf The case when $G=GL_m$ with $m\ge 4$}

Let $G=GL_m$ with $m\ge 4$. It follows from section 3 that  we may
assume that every automorphic representation $\pi_i$ which appears
in integral \eqref{global1} is defined on $GL_{k_im}({\bf A})$ for
some $k_i\ge 1$.   As before, we assume that $\pi_1$ is an
irreducible cuspidal representation of $GL_{km}$ where we write $k$
for $k_1$.

The following proposition is valid for all $m\ge 2$.
\begin{proposition}\label{prop2}
Suppose that  $m\ge 2$.  Then $k=1, 2$.

\end{proposition}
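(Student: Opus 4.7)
The plan is to compute the contribution of $\pi_1$ to the dimension equation explicitly, then use the lower bounds on the contributions from the other $\pi_i$ to force $k$ to be small.

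First I would observe that since $\pi_1$ is cuspidal on $GL_{km}({\bf A})$, it is generic, and hence ${\mathcal O}(\pi_1) = (km)$, the principal orbit. In $GL_{km}$ this orbit has dimension $(km)^2 - km$, so $\dim \pi_1 = \tfrac{1}{2}km(km-1)$. On the other hand, $U({\mathcal O}_{G_1}) = U_{k,m}((k^m))$ is the block unipotent radical described in section 3, and by counting the $\binom{k}{2}$ strictly upper $m\times m$ blocks, we have $\dim U({\mathcal O}_{G_1}) = \tfrac{1}{2}km^2(k-1)$ (equivalently, this is half the dimension of the orbit $(k^m)$ in $GL_{km}$). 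A short algebraic simplification then gives
\begin{equation*}
\dim \pi_1 - \dim U({\mathcal O}_{G_1}) \;=\; \tfrac{1}{2}\bigl(km(km-1)-km^2(k-1)\bigr) \;=\; \tfrac{1}{2}km(m-1).
\end{equation*}

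Next I would invoke what is already established in the paper: (i) the dimension equation for $G=GL_m$ reads $\sum_{i=1}^{l}(\dim\pi_i - \dim U({\mathcal O}_{G_i})) = m^2-1$; (ii) by the assumption that the representations $\sigma_i$ attached to the Fourier coefficients are not one-dimensional, each summand satisfies $\dim\pi_i - \dim U({\mathcal O}_{G_i})\ge 1$; and (iii) only the case $l\ge 2$ is under consideration (the case $l=1$ having been excluded in section 2). Combining these with the formula above yields
\begin{equation*}
\tfrac{1}{2}km(m-1) + (l-1) \;\le\; m^2-1,
\end{equation*}
and so, using $l-1\ge 1$, we get $\tfrac{1}{2}km(m-1)\le m^2-2$, i.e.\ $km(m-1)\le 2m^2-4$.

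Finally I would convert this into the bound on $k$. Dividing by $m-1>0$ and using the identity $2m^2-4 = 2(m-1)(m+1)-2$, one obtains
\begin{equation*}
km \;\le\; 2(m+1) - \tfrac{2}{m-1}.
\end{equation*}
For $m=2$ this says $2k\le 4$, so $k\le 2$; for $m\ge 3$ the right side is strictly less than $2m+2$, giving $k < 2 + 2/m \le 8/3<3$, hence again $k\le 2$. Since $k\ge 1$ by definition, this finishes the proof. No step is really an obstacle here; the whole argument is a combinatorial consequence of the genericity of cuspidal representations on $GL_n$ together with the lower bounds on the summands already proved in section 2.
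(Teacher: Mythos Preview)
Your proof is correct and follows essentially the same approach as the paper: compute the contribution of the generic cuspidal $\pi_1$ explicitly, use that the remaining summands are each at least $1$ and that $l\ge 2$, and conclude $k\le 2$. The only cosmetic difference is the final algebra: the paper rewrites the dimension equation as
\[
\sum_{i=2}^{l}(\dim \pi_i - \dim U({\mathcal O}_{G_i})) + \bigl(\tfrac{1}{2}km - m - 1\bigr)(m-1) = 0,
\]
and observes directly that for $k\ge 3$ the second term is nonnegative while the sum is strictly positive, avoiding the case split on $m$.
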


\begin{proof}
From the dimension equation \eqref{dim1} we obtain
\begin{equation}
\sum_{i=2}^{l}(\text{dim}\ \pi_i - \text{dim}\ U({\mathcal
O}_{G_i}))\ +\text{dim}\ \pi_1 - \text{dim}\ U({\mathcal O}_{G_1}) =
\text{dim}\ G - \text{dim}\ Z\notag
\end{equation}
Since $\pi_1$ is cuspidal, then it is generic and $\text{dim}\
\pi_1=\frac{1}{2}\text{dim}\ (km)= \frac{1}{2}km(km-1)$. We also
have $\text{dim}\ U_{k,m}({\mathcal O})=\frac{1}{2}k(k-1)m^2$. Hence
the dimension equation is
\begin{equation}
\sum_{i=2}^{l}(\text{dim}\ \pi_i - \text{dim}\ U({\mathcal
O}_{G_i}))\ + \frac{1}{2}km(km-1)- \frac{1}{2}k(k-1)m^2=m^2-1\notag
\end{equation}
This is the same as
\begin{equation}
\sum_{i=2}^{l}(\text{dim}\ \pi_i - \text{dim}\ U({\mathcal
O}_{G_i}))\ + (\frac{1}{2}km-m-1)(m-1)=0\notag
\end{equation}
If $k\ge 3$ then the left hand side is a positive number. Hence, we
must have $k=1,2$.

\end{proof}

Assume first that $k=2$. Then the above equation becomes
\begin{equation}\label{dim2}
\sum_{i=2}^{l}(\text{dim}\ \pi_i - \text{dim}\ U({\mathcal
O}_{G_i}))=m-1
\end{equation}
Next consider the Fourier coefficient $E_\tau^{U_l,\psi_{U_l}}(g,s)$
which appears in integral \eqref{global1}. This Eisenstein series is
defined on the group $G_l({\bf A})$. Assume  that $G_l=GL_{pm}$ for
some $p\ge 1$. The unipotent orbit attached to the Fourier
coefficient of the Eisenstein series is $(p^m)$, hence ${\mathcal
O}_{GL_{pm}}(E_\tau)> (p^m)$, or ${\mathcal O}_{GL_{pm}}(E_\tau)\ge
((p+1)p^{m-2}(p-1))$. Thus, from the formula for the dimension of a
partition, see \cite{C-M}, we obtain
$$\text{dim}\ E_\tau - \text{dim}\ U_{p,m}({\mathcal
O})=\frac{1}{2}(\text{dim}\ {\mathcal
O}_{GL_{pm}}(E_\tau)-\text{dim} (p^m))\ge$$ $$\ge
\frac{1}{2}(\text{dim}\ ((p+1)p^{m-2}(p-1))- \text{dim}\
(p^m))=m-1$$ Combining this with equation \eqref{dim2}, we deduce
that when $k=2$, we also have $l=2$.

Thus, the case when $\pi_1$ is an irreducible cuspidal
representation of $GL_{2m}({\bf A})$ produces Table 8.
\begin{table}
\begin{center}\begin{tabular}{ | l | r | r | r |}
\hline ${\mathcal O}(\pi_1)$ & $(2m)$\ \ \ \ \ \ \ \ \ \     \\
\hline  ${\mathcal O}(E_\tau)$ & $((p+1)p^{m-2}(p-1))$\\   \hline
\end{tabular}
\bigskip  \caption{} \label{ta8}
\end{center}
\end{table}

Assume that $k=1$. In this case $\pi_1$ is a cuspidal representation
of $GL_m({\bf A})$. Assuming that integral \eqref{global1} satisfies
the dimension equation, does not by itself limit the possibilities
as in the previous cases. In some more details, it follows from the
proof of proposition \ref{prop2} that the dimension equation is
\begin{equation}
\sum_{i=2}^{l}(\text{dim}\ \pi_i - \text{dim}\ U({\mathcal
O}_{G_i}))\ =(\frac{1}{2}m+1)(m-1)\notag
\end{equation}
It is not hard to produce examples of representations which
satisfies this equation. For example, when $m=4$, the right hand
side of the above equation is equal to 9. It is not hard to
construct an Eisenstein series $E_\tau$ on $GL_4({\bf A})$ such that
$\text{dim}\ E_\tau=3$. Indeed, let $\tau$ be the trivial representation and assume that $E_\tau$ is the Eisenstein series associated with the induced representation $Ind_{P({\bf A})}^{GL_4({\bf A})}\delta_P^s$. Here $P$ is the maximal parabolic subgroup of $GL_4$ whose Levi part is $GL_3\times GL_1$.  Hence, the integral
$$\int\limits_{Z({\bf A})GL_4(F)\backslash GL_4({\bf
A})}\varphi_{\pi_1}(g)E_\tau(g,s_1)E_\tau(g,s_2)E_\tau(g,s_3)dg$$
satisfies the dimension equation. Notice that this does not
necessarily  mean that conjecture \ref{conj1} is not true for $m\ge
4$, since in that conjecture we assume that the global integral is
nonzero. And indeed this is what happens in the above integral. A
simple unfolding implies that it is identically zero.

\section{\bf On Some Eisenstein Series}

In this section we study some Eisenstein series needed to construct
integrals of the type \eqref{global1}. More precisely, the tables
produced in the previous section assumes the existence of certain
Eisenstein series with certain Fourier coefficients. In this section
we indicate how to construct such Eisenstein series.

Given a reductive classical group $H$, it follows from \cite{C-M} that  unipotent orbits of $H$ are parameterized by certain partitions. Given such a partition $\lambda$, we emphasize the dependence on $H$ by writing $\lambda_H$ instead of $\lambda$. Given two
partitions ${\frak b}_1=(k_1k_2\ldots k_p)$ and ${\frak
b}_2=(m_1m_2\ldots m_q)$ of the numbers $n$ and $r$, we set ${\frak
b}_1+{\frak b}_2=((k_1+m_1)(k_2+m_2)\ldots )$. We also write
$2{\frak b}={\frak b}+{\frak b}$.

Let $H$ denote a reductive group, and let $P=MU$ denote a maximal
parabolic subgroup of $H$. Let $\tau$ denote an automorphic
representation of $M({\bf A})$, and denote by $E_\tau(\cdot,s)$ the
Eisenstein series associated with the induced representation
$Ind_{P({\bf A})}^{H({\bf A})}\tau\delta_P^s$. Notice that by
induction of stages, this covers all possible Eisenstein series. We
are interested in the set ${\mathcal O}_H(E_\tau(\cdot,s))$ for
$\text{Re}(s)$ large. By that we mean in the domain where the
Eisenstein series is defined by a convergent series. For the
classical groups we have the following

\begin{proposition}\label{prop3}
With the above notations, for $\text{Re}(s)$ large, we have

{\bf 1)}\ For $H=GL_n$, assume that $M=GL_a\times GL_{n-a}$, and
$\tau=\tau_1\otimes \tau_2$. Then we have
$${\mathcal O}_{GL_n}(E_\tau(\cdot,s))={\mathcal O}_{GL_a}(\tau_1)
+{\mathcal O}_{GL_{n-a}}(\tau_2)$$

{\bf 2)}\ For $H=GSp_{2n}$, assume that $M=GL_a\times GSp_{2(n-a)}$,
and $\tau=\tau_1\otimes \tau_2$. Then we have
$${\mathcal O}_{GSp_{2n}}(E_\tau(\cdot,s))=2{\mathcal O}_{GL_a}(\tau_1)
+{\mathcal O}_{GSp_{2(n-a)}}(\tau_2)$$

{\bf 3)}\ For $H=GSO_{2n}$, assume that $M=GL_a\times GSO_{2(n-a)}$,
and $\tau=\tau_1\otimes \tau_2$. Then we have
$${\mathcal O}_{GSO_{2n}}(E_\tau(\cdot,s))=2{\mathcal O}_{GL_a}(\tau_1)
+{\mathcal O}_{GSO_{2(n-a)}}(\tau_2)$$

In particular, if $H$ is one of the above classical groups, then
$\text{dim}\ {\mathcal O}_{H}(E_\tau(\cdot,s))=\text{dim}\ \tau
+\text{dim}\ U$.

\end{proposition}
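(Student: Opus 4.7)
The plan is to prove the proposition by establishing the set $\mathcal{O}_H(E_\tau(\cdot,s))$ equals the claimed orbit from both sides. The main tools will be the constant--term formula for an Eisenstein series induced from a maximal parabolic $P=MU$, the root--exchange technique for transferring Fourier coefficients between $H$ and its Levi $M$, and the combinatorics of Lusztig--Spaltenstein induced nilpotent orbits (which explains both the sum $\mathcal{O}(\tau_1)+\mathcal{O}(\tau_2)$ in case {\bf 1)} and the doubling $2\mathcal{O}(\tau_1)$ in cases {\bf 2)} and {\bf 3)}).

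For the upper--bound inclusion, I would fix a unipotent orbit $\mathcal{O}'$ of $H$ strictly greater than the claimed orbit $\mathcal{O}$, and show that the Fourier coefficient of $E_\tau(\cdot,s)$ along $(U(\mathcal{O}'),\psi_{U(\mathcal{O}')})$ vanishes identically for $\text{Re}(s)$ large. In that range the Eisenstein series is absolutely convergent, so this coefficient expands as a sum over Weyl cosets of integrals of the inducing section $f_{\tau,s}$. Root exchange then transports the inner portion of each such integral into an integration over a unipotent subgroup of $M$ against a character, and matches the resulting object with a Fourier coefficient of $\tau=\tau_1\otimes\tau_2$ attached to an orbit of $M$ that is strictly greater than the pair $(\mathcal{O}(\tau_1),\mathcal{O}(\tau_2))$. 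By the definition of $\mathcal{O}_M(\tau)$ this transported coefficient vanishes for every choice of data, hence so does the original.

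For the lower bound I would exhibit an explicit nonvanishing Fourier coefficient of $E_\tau$ along $(U(\mathcal{O}),\psi_{U(\mathcal{O})})$ for the claimed orbit $\mathcal{O}$. The partitions $\mathcal{O}(\tau_1)+\mathcal{O}(\tau_2)$ and $2\mathcal{O}(\tau_1)+\mathcal{O}(\tau_2)$ are engineered precisely so that $U(\mathcal{O})$ contains $U$ and the quotient $U(\mathcal{O})/U$ identifies with a product of the unipotent subgroups attached to $\mathcal{O}(\tau_1)$ and $\mathcal{O}(\tau_2)$ sitting inside the corresponding factors of $M$. Substituting this structure into the Fourier coefficient identifies it, up to an outer Eisenstein--type integration, with a product of the Fourier coefficients of $\tau_1$ and $\tau_2$ along their attached unipotent orbits, and this product is nonzero for some choice of data by the defining property of $\mathcal{O}(\tau_i)$. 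The factor of $2$ in cases {\bf 2)} and {\bf 3)} encodes that each $GL_a$--block inside a symplectic or even orthogonal group is paired with a dual block through the invariant form, and both copies contribute equally to the induced partition.

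The dimension statement then reduces to a partition computation. Using $\text{dim}\ \pi=\tfrac{1}{2}\text{dim}\ \mathcal{O}(\pi)$, one needs the identity $\text{dim}\ \mathcal{O}_H(E_\tau)=\text{dim}\ \mathcal{O}(\tau_1)+\text{dim}\ \mathcal{O}(\tau_2)+2\,\text{dim}\ U$. In case {\bf 1)} this is $\text{dim}\,(\lambda_1+\lambda_2)-\text{dim}\,\lambda_1-\text{dim}\,\lambda_2=2a(n-a)=2\,\text{dim}\ U$, which is standard; the symplectic and orthogonal cases are parallel once one substitutes the corresponding classical--group formula for the dimension of a nilpotent orbit. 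The main obstacle I expect is the upper--bound step in cases {\bf 2)} and {\bf 3)}: the Jacobson--Morozov slice attached to $2\lambda_1+\lambda_2$ is more intricate than in the linear case because of the pairing with dual root groups imposed by the invariant form, and the root exchange must be carried out carefully so that the transported Fourier coefficient on $M$ really lands on an orbit genuinely strictly greater than $(\mathcal{O}(\tau_1),\mathcal{O}(\tau_2))$.
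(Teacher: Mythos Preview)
Your proposal is correct and matches the paper's approach: the paper says the proof is ``a straight forward computation of the relevant unipotent orbit, which is done by unfolding the Eisenstein series,'' refers to \cite{G3} Proposition~1 for an analogous computation, and omits the details---this is exactly your upper-bound/lower-bound strategy via unfolding and root exchange. The one place where the paper is more efficient is the final dimension statement: rather than verifying the partition identity $\text{dim}(\lambda_1+\lambda_2)-\text{dim}\,\lambda_1-\text{dim}\,\lambda_2=2\,\text{dim}\,U$ (and its $Sp$/$SO$ analogues) directly as you propose, the paper observes that parts {\bf 1)}--{\bf 3)} identify $\mathcal{O}_H(E_\tau)$ with the Lusztig--Spaltenstein induced orbit of \cite{C-M} \S7 and then quotes \cite{C-M} Lemma~7.2.5, which gives $\text{dim}\,\text{Ind}_M^H\mathcal{O}=\text{dim}\,\mathcal{O}+2\,\text{dim}\,U$ uniformly across all classical types, sidestepping the case analysis you flag as a potential obstacle.
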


\begin{proof}
The proof of this proposition is a straight forward computation of
the relevant unipotent orbit, which is done by unfolding the
Eisenstein series. The computations in general, are very similar to
the computations done in \cite{G3} proposition 1. We omit the
details.

The last equation in the statement of the proposition follows from
the fact that from the above parts {\bf 1)-3)}, we deduce that
${\mathcal O}_{H}(E_\tau(\cdot,s))$ is the induced orbit as defined
in \cite{C-M} section 7. Then the statement about the dimension
follows from this reference lemma 7.2.5.

\end{proof}

Since we are mainly interested in the case when $m=2$, we work out
the relevant Eisenstein series in this case only. In other words,
the Eisenstein series which appear in tables \ref{ta1} and
\ref{ta2}. From the above proposition we deduce,

\begin{lemma}\label{lem2}
{\bf A)}\ For the group $GL_{2p}$ we have the following cases,

{\bf 1)}\ Suppose that ${\mathcal O}(E_\tau(\cdot,s))=((p+1)(p-1))$.
Then, there is  $0\le i\le 2$ such that $M=GL_{2(a-1)+i}\times
GL_{2(p-a+1)-i}$ ; ${\mathcal O}(\tau_1)=(a(a-2+i))$ and ${\mathcal
O}(\tau_2)=((p-a+1)(p-a+1-i))$.

{\bf 2)}\ Suppose that ${\mathcal O}(E_\tau(\cdot,s))=((p+2)(p-2))$.
Then, there is $0\le i\le 4$ such that $M=GL_{2(a-2)+i}\times
GL_{2(p-a+2)-i}$ ; ${\mathcal O}(\tau_1)=(a(a-4+i))$ and ${\mathcal
O}(\tau_2)=((p-a+2)(p-a+2-i))$.

{\bf B)}\ For the group $GSp_{4p+2}$ we have the following cases,

{\bf 3)}\ Suppose that ${\mathcal O}(E_\tau(\cdot,s))=((2p+2)(2p))$.
Then, there is  $0\le i\le 1$ such that $M=GL_{2a-i}\times
GSp_{2(2p-2a+i+1)}$ ; ${\mathcal O}(\tau_1)=(a(a-i))$ and ${\mathcal
O}(\tau_2)=((2p-2a+2)(2p-2a+2i))$.

{\bf 4)}\ Suppose that ${\mathcal
O}(E_\tau(\cdot,s))=((2p+4)(2p-2))$. Then, there is  $0\le i\le 3$
such that $M=GL_{2a-i}\times GSp_{2(2p-2a+i+1)}$ ; ${\mathcal
O}(\tau_1)=(a(a-i))$ and ${\mathcal
O}(\tau_2)=((2p-2a+4)(2p-2a+2i-2))$.

{\bf C)}\ For the group $GSO_{4p}$ we have the following cases,

{\bf 5)}\ Suppose that ${\mathcal
O}(E_\tau(\cdot,s))=((2p+1)(2p-1))$. Then, there is  $0\le i\le 1$
such that $M=GL_{2a-i}\times GSO_{2(2p-2a+i)}$ ; ${\mathcal
O}(\tau_1)=(a(a-i))$ and ${\mathcal
O}(\tau_2)=((2p-2a+1)(2p-2a+2i-1))$.

{\bf 6)}\ Suppose that ${\mathcal
O}(E_\tau(\cdot,s))=((2p+3)(2p-3))$. Then, there is  $0\le i\le 3$
such that $M=GL_{2a-i}\times GSO_{2(2p-2a+i)}$ ; ${\mathcal
O}(\tau_1)=(a(a-i))$ and ${\mathcal
O}(\tau_2)=((2p-2a+3)(2p-2a+2i-3))$.

\end{lemma}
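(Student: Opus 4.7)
The plan is to deduce all six cases directly from Proposition 3 by an elementary enumeration of partition decompositions, exactly as one would expect from a classification lemma.

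First, by induction in stages (as noted just before the lemma), it suffices to treat maximal parabolics, so I write $M=GL_a\times G'$ with $G'$ being $GL$, $GSp$ or $GSO$ matching the type of $H$, and $\tau=\tau_1\otimes\tau_2$. Then Proposition 3 gives
\begin{equation*}
{\mathcal O}_H(E_\tau(\cdot,s))={\mathcal O}(\tau_1)+{\mathcal O}(\tau_2) \quad\text{or}\quad 2{\mathcal O}(\tau_1)+{\mathcal O}(\tau_2),
\end{equation*}
depending on whether $H=GL$ or $H=GSp,GSO$. Since the target orbit in each of the six cases is a two-part partition, any nonzero third part of ${\mathcal O}(\tau_1)$ or ${\mathcal O}(\tau_2)$ would survive in the sum; hence both constituent orbits are themselves two-part partitions (possibly with a zero tail).

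Writing ${\mathcal O}(\tau_1)=(b_1,b_2)$ with $b_1\geq b_2\geq 0$ and ${\mathcal O}(\tau_2)=(c_1,c_2)$ with $c_1\geq c_2\geq 0$, each case reduces to a linear system in four unknowns with one equation for each part of the target. Introducing the parameter $i=b_1-b_2$ and solving for $(b_1,b_2,c_1,c_2)$ with the remaining degree of freedom labeled $a$, I get in Case 1, for example, $b_1=a$, $b_2=a-2+i$, $c_1=p+1-a$, $c_2=p+1-a-i$; the monotonicity condition $b_1\geq b_2$ gives $i\leq 2$ and $c_1\geq c_2$ gives $i\geq 0$, producing the stated range $0\leq i\leq 2$. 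In the doubled cases (3)--(6), the relation $2b_j+c_j=\text{(target)}_j$ forces $c_j$ to have the same parity as $(\text{target})_j$, so the range of $i$ is further constrained (to $\{0,1\}$ or $\{0,1,2,3\}$ as listed). The dimensions $|GL_{2a-i}|$ and the rank of $G'$ are then read off from $b_1+b_2$ and $c_1+c_2$ respectively, matching the form $M=GL_{2a-i}\times G'_{\ldots}$ in the lemma.

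The only nontrivial point, and the one I would be most careful about, is verifying that the resulting ${\mathcal O}(\tau_2)$ in the symplectic and orthogonal cases is actually a legitimate orbit of $GSp$ or $GSO$, i.e.\ that odd parts have even multiplicity (for $GSp$) and even parts have even multiplicity (for $GSO$). In every case this is automatic from the parity of the target partition together with the doubled contribution $2{\mathcal O}(\tau_1)$: for instance in Case 3 the target $((2p+2)(2p))$ has both parts even, and subtracting $2(b_1,b_2)$ keeps both parts even, so $(c_1,c_2)$ is a valid $GSp$ partition. A parallel check works for Cases 4--6. Since the six cases are formally identical and differ only in which target partition one substitutes, I would write out Case 1 in detail and remark that the others follow by the same elementary enumeration, leaving the routine case-check to the reader.
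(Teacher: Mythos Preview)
Your approach is essentially identical to the paper's: both deduce everything from Proposition~\ref{prop3}, observe that a two-part target forces each ${\mathcal O}(\tau_j)$ to be a two-part partition, set up and solve the resulting linear system with one free parameter, work out Case~1 in detail, and declare the remaining cases analogous. One small slip: you write ``introducing the parameter $i=b_1-b_2$'' but then give $b_1=a$, $b_2=a-2+i$, which yields $b_1-b_2=2-i$; the parameter $i$ in the lemma's statement is actually $c_1-c_2$ (equivalently, the parity/offset of the Levi block size), not $b_1-b_2$---this is purely notational and does not affect the argument.
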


\begin{proof}
The proof follows immediately from Proposition \ref{prop3}. We give some details about the first case. 

Assume that ${\mathcal
O}(E_\tau(\cdot,s))=((p+1)(p-1))$.
Assume that ${\mathcal O}(\tau_1)=(\alpha_1\beta_1)$ and that ${\mathcal O}(\tau_2)=(\alpha_1\beta_2)$ then it follows from Proposition \ref{prop3} that $\alpha_1+\alpha_2=p+1$ and $\beta_1+\beta_2=p-1$. Assume that $\tau_1$ is an automorphic representation of $GL_{2(a-1)+i}({\bf A})$ for some $a$ and $i=1,2$. Then $\tau_2$ is an automorphic representation of $GL_{2(p-a+1)-i}({\bf A})$. This means that $\alpha_1+\beta_1=2a+i-2$ and that $\alpha_2+\beta_2=2p-2a-i+2$. Also, we have $\alpha_1\ge \beta_1$ and $\alpha_2\ge \beta_2$. From these six relations the claim follows. Indeed, we obtain the relations $\alpha_1=p+1-\alpha_2$, then $\beta_1=2a+i-p+\alpha_2-3$ and $\beta_2=2p-2a-i-\alpha_2+2$. The relation $\alpha_1\ge \beta_1$ implies $2p+4\ge 2a+i+2\alpha_2$, and the second inequality implies $2a+i+2\alpha_2\ge 2p+2$. Hence, $2a+i+2\alpha_2=2p+2, 2p+3, 2p+4$. If $i=1$ then we must have $2a+i+2\alpha_2=2p+3$ from which it follows that $\alpha_1=a$ . From this the claim follow
 s. Similar result happens when $i=2$. We omit the details.

\end{proof}

For the exceptional groups we proceed as follows. We use the
following lemma which is a version of proposition 5.16 in \cite{G2}.

\begin{lemma}\label{lemex}
Let $H$ denote an exceptional group, and let $E_\tau(\cdot,s)$
denote an Eisenstein series attached to $Ind_{P({\bf A})}^{H({\bf
A})}\tau\delta_P^s$. Here $P$ is a maximal parabolic subgroup of $H$
with Levi decomposition $P=MU$. Let $\tau$ denote an automorphic
representation of $M({\bf A})$. Then, for $\text{Re}(s)$ large, we
have $\text{dim}\ {\mathcal O}_{H}(E_\tau(\cdot,s))=\text{dim}\ \tau
+\text{dim}\ U$.
\end{lemma}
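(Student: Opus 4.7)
The plan is to adapt the argument of Proposition 5.16 of \cite{G2} to the exceptional setting. The key step is to identify the orbit $\mathcal{O}_H(E_\tau(\cdot,s))$ with the Lusztig--Spaltenstein induced orbit $\mathrm{Ind}_M^H(\mathcal{O}_M(\tau))$. Granting this identification, the dimension formula follows at once from Lemma 7.2.5 of \cite{C-M}, which gives $\dim \mathrm{Ind}_M^H(\mathcal{O}) = \dim \mathcal{O} + 2\dim U$, combined with the normalization $\dim \tau = \tfrac{1}{2}\dim \mathcal{O}_M(\tau)$.

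For the upper half of the identification, I would show that any Fourier coefficient of $E_\tau(\cdot,s)$ attached to a unipotent orbit $\mathcal{O}' > \mathrm{Ind}_M^H(\mathcal{O}_M(\tau))$ vanishes identically for $\mathrm{Re}(s)$ large. Unfolding $E_\tau$ against $U(\mathcal{O}')$ using a set of representatives for $P(F)\backslash H(F)/U(\mathcal{O}')(F)$, each summand reduces, after conjugating $M$ through the representative, to a Fourier coefficient of $\tau$ along a unipotent subgroup of $M$ whose orbit label strictly exceeds $\mathcal{O}_M(\tau)$; such coefficients vanish by definition of $\mathcal{O}_M(\tau)$.

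For the lower half, I would choose a Jacobson--Morozov $\mathfrak{sl}_2$-triple for $\mathrm{Ind}_M^H(\mathcal{O}_M(\tau))$ whose neutral element is compatible with the Levi decomposition $P=MU$---the existence of such a triple is precisely the induced-orbit construction. The associated Fourier coefficient of $E_\tau$ then unfolds cleanly into a Fourier coefficient of $\tau$ attached to $\mathcal{O}_M(\tau)$ multiplied by an absolutely convergent integral over the remaining unipotent directions, and non-vanishing of the former for some data propagates to non-vanishing of the latter.

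The main obstacle is the upper-bound step in the exceptional case. For classical $H$ the parameterization of orbits by partitions makes the double coset analysis fairly transparent, but for $H$ of type $E_6$, $E_7$, or $E_8$ one must classify the relevant cosets $P(F)\backslash H(F)/U(\mathcal{O}')(F)$ using the root datum and Bala--Carter labels, and then verify root-by-root that each representative conjugates the Fourier datum on $M$ to one whose orbit label is strictly greater than $\mathcal{O}_M(\tau)$. This case analysis, which parallels the explicit computations of \cite{G4}, is the substantive content of the lemma.
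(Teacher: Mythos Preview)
The paper does not actually prove Lemma~\ref{lemex}: it is introduced with the phrase ``the following lemma which is a version of proposition 5.16 in \cite{G2}'' and then stated without proof. So there is no argument in the paper to compare against beyond that citation.

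Your proposal is consistent with that citation and, in fact, mirrors exactly the strategy the paper does spell out in the classical case (Proposition~\ref{prop3}): identify ${\mathcal O}_H(E_\tau(\cdot,s))$ with the induced orbit in the sense of \cite{C-M} section~7, and then read off the dimension from Lemma~7.2.5 there. Your upper- and lower-bound sketch via unfolding and double cosets is the standard way to establish that identification, and is precisely what ``a version of proposition 5.16 in \cite{G2}'' would mean. In that sense you have supplied more than the paper does, and your outline is sound; the only honest caveat is the one you already flag, namely that the upper-bound double-coset analysis for exceptional $H$ is a genuine (if routine) case check rather than a one-line argument.
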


\section{\bf On some Fourier Expansions}

Let $\pi$ denote an automorphic representation of the group $H({\bf
A})$, where $H$ is one of the groups $GL_{2k},\ GSp_{2(2k+1)},\
GSO_{4k}$, or $GE_7$. Let $V$ be any one of the unipotent subgroups
defined in section 3 part {\bf 1)} with $m=2$, or part {\bf 3)}. Let
$\psi_V$ denote the character of $V(F)\backslash V({\bf A})$ defined
in that section in each case. Then the stabilizer of $\psi_V$
contains the group $GL_2$.

For $g\in GL_2$, define
$$f(g)=\int\limits_{V(F)\backslash V({\bf A})}\varphi_\pi(vg)\psi_{V}(v)dv$$
In this section we compute the following integral
\begin{equation}\label{whgl1}
\int\limits_{F\backslash {\bf A}}f\left (\begin{pmatrix} 1&y\\
&1\end{pmatrix}g\right )\psi(ay)dy
\end{equation}
where $a=0,1$.

\begin{lemma}\label{fourgl1}
{\bf a)}\ When $a=1$, integral \eqref{whgl1} corresponds to the
Fourier coefficient of $\pi$ associated with the unipotent orbit\\
{\bf 1)}\ $((k+1)(k-1))$ if $H=GL_{2k}$.\\
{\bf 2)}\ $((2k+2)(2k))$ if $H=GSp_{2(2k+1)}$.\\
{\bf 3)}\ $((2k+1)(2k-1))$ if $H=GSO_{4k}$.\\
{\bf 4)}\ $E_7(a_2)$ if $H=GE_7$.

{\bf b)}\ When $a=0$, the constant term of $f(g)$ corresponds to a
sum of Fourier coefficients associated with  every unipotent orbit
of $H$ which is strictly greater than the unipotent orbit written in
part {\bf a)},  and a certain Fourier coefficient which contains the
constant term  specified below, as an inner integration.
\end{lemma}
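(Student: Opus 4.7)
The plan is to handle all four cases uniformly by exploiting the fact that the diagonal $GL_2$ sits inside a Levi subgroup stabilizing $\psi_V$. Since $GL_2^\Delta$ normalizes $V$ and preserves $\psi_V$, the diagonally embedded upper-unipotent element $n(y) = \begin{pmatrix} 1 & y \\ & 1 \end{pmatrix}$ of $GL_2$ also normalizes $V$ and commutes with $\psi_V$. Consequently, the product $V' = V \cdot \{ n(y) : y \in \mathbf{A}\}$ is a unipotent subgroup of $H$, on which we may define the extended character $\psi_{V'} = \psi_V \cdot \psi(ay)$.

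For part (a), with $a = 1$, the integral \eqref{whgl1} is equal to
\[
\int_{V'(F)\backslash V'(\mathbf{A})} \varphi_\pi(v' g)\, \psi_{V'}(v')\, dv',
\]
a Fourier coefficient of $\pi$ attached to the pair $(V',\psi_{V'})$. I would then verify, case by case, that this pair represents the Fourier coefficient of $\pi$ corresponding to the claimed unipotent orbit. For $H = GL_{2k}$, the partition $(k^2)$ defining $V$ acquires one additional ``diagonal'' one-parameter contribution from $n(y)$, yielding the partition $((k+1)(k-1))$. Similar counts give $((2k+2)(2k))$ for $H = GSp_{2(2k+1)}$ and $((2k+1)(2k-1))$ for $H = GSO_{4k}$. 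For $H = GE_7$, the six roots listed in Section 3 part 3) together with the new root from $n(y)$ match the standard description of $E_7(a_2)$, as can be checked directly using the notation of \cite{G4}.

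For part (b), with $a = 0$, the integral \eqref{whgl1} is a constant term of $f$ along the one-parameter subgroup $\{n(y)\}$. I would Fourier-expand this constant term along successive one-parameter subgroups of $H$ sitting just outside $V'$, using root exchange arguments in the style of \cite{G2}. The diagonal $GL_2^\Delta$ acts on the characters of each such expansion with finitely many orbits; each non-trivial orbit contributes, after conjugation, a Fourier coefficient of $\pi$ attached to a unipotent orbit strictly dominating the one in part (a). Iterating this procedure until only trivial characters remain produces the claimed decomposition: a sum of Fourier coefficients corresponding to all strictly larger orbits, plus one residual term in which the constant term specified in the statement appears as an inner integration.

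The main obstacle is the $GE_7$ case, where both parts demand careful bookkeeping with the roots of $E_7$ and with the $GL_2^\Delta$-orbits on the character groups of various abelian quotients. In the classical cases the verifications reduce to straightforward matrix manipulations with the Young-diagram structure of the relevant partitions; in the exceptional case I would rely on the explicit root description of \cite{G4} to match $(V',\psi_{V'})$ with the Jacobson--Morozov data of $E_7(a_2)$. A secondary technical point in part (b) is to ensure that every strictly larger orbit arises exactly once, which I would control by organising the expansion according to the dominance order on orbits.
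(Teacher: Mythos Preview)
Your outline has the right shape but skips the step that carries the actual content in both parts.

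For part (a), writing the integral as a coefficient over $V'=V\cdot\{n(y)\}$ with the extended character is immediate, but the pair $(V',\psi_{V'})$ is \emph{not} in the standard Jacobson--Morozov form attached to the orbit $((k+1)(k-1))$ (or its analogues). Your sentence ``the partition $(k^2)$ acquires one additional diagonal one-parameter contribution, yielding $((k+1)(k-1))$'' is a dimension count, not an identification of the coefficient. The paper does the identification by a genuine root-exchange step: it first Fourier-expands along the auxiliary abelian group $V_1=\{I+r_2e_{3,4}+\cdots+r_ke_{2k-1,2k}\}$, then conjugates by rational elements of $L_1=\{I+z_2e_{2,3}+\cdots+z_ke_{2k-2,2k-1}\}\subset V$ to collapse the sum into an adelic integral. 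Only after this exchange does one land on a group $V_2\subset U$ with a character $\psi_{a,V_2}$ that is visibly the standard coefficient for $((k+1)(k-1))$. You need this (or an equivalent exchange) in each of the four cases; without it, the claim that $(V',\psi_{V'})$ ``represents'' the right orbit is unproved.

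For part (b), the paper does \emph{not} iterate expansions directly from the $a=0$ coefficient. It first conjugates by a specific Weyl element $w$ (in $GL_{2k}$: $w_{i,2i-1}=w_{k+i,2i}=1$) that converts the $(k^2)$-shaped coefficient into a two-block $GL_k\times GL_k$ picture with character \eqref{char1}. Only then does the inductive expansion along the one-parameter groups $I+re_{k,j}$ produce, at each step, a nontrivial-orbit term attached to $((k+j')(k-j'))$ and a trivial-orbit term that eventually becomes the constant term along the unipotent radical of the $GL_k\times GL_k$ parabolic. Your plan of expanding ``along successive one-parameter subgroups sitting just outside $V'$'' gives no indication of how the specific parabolic constant term emerges, and without the Weyl conjugation it is not at all clear that it would. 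Also, your final worry about each larger orbit arising ``exactly once'' is off target: in the paper each such orbit contributes a sum over $\xi\in F^*$, not a single term.
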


\begin{proof}
We work out the details for the case where $H=GL_{2k}$. The other
cases are similar. We compute the integral
$$\int\limits_{F\backslash {\bf A}}
\int\limits_{V(F)\backslash V({\bf A})}\varphi_\pi(v
\mu(y))\psi_{V}(v)\psi(ay)dydv$$ where
$$\mu(y)=I_{2k}+ye_{1,2}+ye_{3,4}+\cdots +ye_{2k-1,2k}$$ Here, we
denote by $e_{i,j}$ the matrix of order $2k$ with one at the $(i,j)$
entry and zero elsewhere. Expand the above integral along the
subgroup
$$V_1=\{v_1(r_2,r_3,\ldots, r_k)=
I_{2n}+r_2e_{3,4}+r_3e_{5,6}+\cdots +r_ke_{2k-1,2k}\ \ :\ \ r_i\in
{\bf A}\}$$ Then the above integral is equal to
$$\int\sum_{\xi_i\in F}\int \varphi_\pi(v_1(r_2,r_3,\ldots, r_k)v \mu(y))\psi_{V}(v)
\psi(ay+\sum_{i=2}^k\xi_i r_i)dydvdr_i$$ Let
$$L_1=\{l_1(z_2,z_3,\ldots,z_k)=I_{2k}+z_2e_{2,3}+z_3e_{4,5}+\cdots
+z_ke_{2k-2,2k-1}\}$$ Then $L_1$ is a subgroup of $V$. Since
$\varphi_\pi$ is an automorphic function, then we have
$\varphi_\pi(m)=\varphi_\pi(l_1(\xi_2,\xi_3,\ldots,\xi_k)m)$.
Conjugating this discrete matrix from left to right, collapsing
summation with integration, the last integral is equal to
$$\int\limits_{L_1 ({\bf A})}\ \
\int\limits_{V_2(F)\backslash V_2({\bf A})}\varphi_\pi(v
l_1)\psi_{a,V_2}(v)dvdl_1$$ Here
$$V_2=\{ u\in U\ :\ u_{i,i+1}=0;\ i=2,4,\ldots, 2k-2\}$$ where $U$
is the maximal standard unipotent subgroup of $GL_{2k}$. Also, for
$v\in V_2$ we have
$$\psi_{a,V_2}(v)=\psi(av_{1,2}+v_{1,3}+v_{2,4}+v_{3,5}+\cdots
+v_{2k-2,2k})$$

Assume that $a=1$. Then, using the correspondence between unipotent orbits and Fourier coefficients as described in
\cite{G2} section 2, we deduce that the integration over $V_2$ in the above
integral, is a Fourier coefficient associated with the unipotent
orbit $((k+1)(k-1))$.

Next, assume $a=0$. For this section let $U$ denote the standard maximal unipotent subgroup of $GL_{2k}$. Let $w$ denote the Weyl element of $GL_{2k}$
defined as follows. For all $1\le i\le k$ set
$w_{i,2i-1}=w_{k+i,2i}=1$, and all other entries of $w$ are zeros.
Since $w$ is a discrete element, then $\varphi_\pi$ is left
invariant by it.  Conjugating $w$ from left to right, the above
integral is equal to
$$\int\limits_{L_1 ({\bf A})}\  \int\limits_{L_2(F)\backslash
L_2({\bf A})}\ \int\limits_{V_3(F)\backslash V_3({\bf
A})}\varphi_\pi(vl_2w l_1)\psi_{V_3}(v)dvdl_2dl_1$$ Here $V_3$ is
the subgroup of $U$ defined by
$$V_3=\{ v\in U\ :\ u_{i,j}=0;\ 2\le i\le k\  \text{and}\ k\le j\le
2k-1\}$$ and $L_2$ is the group of all lower unipotent matrices
defined by
$$L_2=\{ l_2=\begin{pmatrix} I_k& \\ X&I_k\end{pmatrix}\ :\ X_{i,j}=0;
\ j\le i+1\}$$ Also, the character $\psi_{V_3}$ is defined as
follows
\begin{equation}\label{char1}
\psi_{V_3}(v)=\psi(v_{1,2}+v_{2,3}+\cdots +v_{k-1,k}+v_{k+1,k+2}+
v_{k+2,k+3}+\cdots +v_{2k-1,2k})
\end{equation}
Let $L_3$ denote the subgroup of $U$ defined by
$$L_3=\{l_3=\begin{pmatrix} I_k&Y \\ &I_k\end{pmatrix}\ :\
Y_{i,j}=0;\ i\le j\ \text{and}\ i=k\}$$ Next we expand the above
integral along the group $L_3(F)\backslash L_3({\bf A})$. Carrying
out a similar process as in the previous expansion in this section,
and doing it one variable at the time, we obtain that the above
integral is equal to
\begin{equation}\label{four10}
\int\limits_{L_1 ({\bf A})}\  \int\limits_{L_2 ({\bf A})}\
\int\limits_{V_4(F)\backslash V_4({\bf A})}\varphi_\pi(vl_2w
l_1)\psi_{V_4}(v)dvdl_2dl_1
\end{equation}
Here $V_4$ is the subgroup of $U$ defined by
$$V_4=\{ u\in U\ :\ u_{k,j}=0;\ k\le j\le 2k-1\}$$ The character
$\psi_{V_4}$ is the trivial extension of $\psi_{V_3}$ to
$\psi_{V_4}$.

Next, consider the expansion of the above integral along the one
parameter unipotent subgroup of $U$ consisting of all matrices of
the form $x_\alpha(r)=I_{2k}+re_{k,2k-1}$. First consider the
contribution from the nontrivial orbit. It is a sum over $\xi\in
F^*$  of  the Fourier coefficients
$$\int\limits_{L_1 ({\bf A})}\  \int\limits_{L_2 ({\bf A})}\
\int\limits_{F\backslash {\bf A}}\int\limits_{V_4(F)\backslash
V_4({\bf A})}\varphi_\pi(x_\alpha(r)vl_2w l_1)\psi_{V_4}(v)\psi(\xi
r)drdvdl_2dl_1$$ Using the corresponding between unipotent orbits
and Fourier coefficients, as described in \cite{G2} section 2, we
deduce that the above Fourier coefficient corresponds to the
unipotent orbit $((k+2)(k-2))$. The second contribution to integral
\eqref{four10} from the above expansion is from the constant term,
and it is equal to
$$\int\limits_{L_1 ({\bf A})}\  \int\limits_{L_2 ({\bf A})}\
\int\limits_{V_5(F)\backslash V_5({\bf A})}\varphi_\pi(vl_2w
l_1)\psi_{V_5}(v)dvdl_2dl_1$$ where
$$V_5=\{ u\in U\ :\ u_{k,j}=0;\ k\le j\le 2k-2\}$$ We further expand
this integral along $x_\alpha(r)=I_{2k}+re_{k,2k-2}$. Again we get
two contributions. The first, from the nontrivial orbit, contributes
a sum of Fourier coefficient, each corresponds to the unipotent
orbit $((k+3)(k-3))$. The second is the constant term. Arguing by
induction we eventually expand along the group
$x_\alpha(r)=I_{2k}+re_{k,k+1}$. The contribution from the
nontrivial orbit will produce a sum of Fourier coefficient which
corresponds to the unipotent orbit $(2k)$, and the trivial orbit
will produce the integral
$$\int\limits_{L_1 ({\bf A})}\  \int\limits_{L_2 ({\bf A})}\
\int\limits_{U(F)\backslash U({\bf A})}\varphi_\pi(ul_2w
l_1)\psi_{U}(v)dudl_2dl_1$$ where $\psi_U$ is the character defined
by \eqref{char1} extended trivially to $U$. Notice that this last
integral contains the constant term of $\pi$ along the unipotent
radical of the maximal parabolic group whose Levi part is
$GL_k\times GL_k$. To summarize, we expressed integral \eqref{whgl1} as a sum of Fourier coefficients which corresponds to unipotent orbits which are greater than $(k+1)(k-1))$, and an integral containing a constant term as an inner integration. This is the statement in part {\bf b)} of the lemma.

We finish the proof of the lemma with the description of the
constant term which is obtained in the other cases. First, in the
classical groups. In the case when $H=GSp_{2(2k+1)}$ we obtain the
constant term along the unipotent radical of the maximal parabolic
subgroup whose Levi part is $GL_{2k+1}$. When $H=GSO_{4k}$ we get
the unipotent radical of the maximal parabolic subgroup whose Levi
subgroup is $GL_{2k}$. Finally, when $H=GE_7$ we obtain the
unipotent radical of the maximal parabolic subgroup whose Levi part
is $E_6$.

\end{proof}

\section{\bf Unfolding Global Integrals with $G=GL_2$}

It follows from Tables 1 and 2 that there are two cases to consider
for the group $G=GL_2$. First, if $l=2$ the global integral
\eqref{global1} is
\begin{equation}\label{gl21}
\int\limits_{Z({\bf A})G(F)\backslash G({\bf
A})}\varphi_{\pi_1}^{U_1,\psi_{U_1}}(g)E_\tau^{U_2,\psi_{U_2}}(g,s)dg
\end{equation}
and second, if $l=3$ we have,
\begin{equation}\label{gl22}
\int\limits_{Z({\bf A})G(F)\backslash G({\bf
A})}\varphi_{\pi_1}^{U_1,\psi_{U_1}}(g)\varphi_{\pi_2}^{U_2,\psi_{U_2}}(g)
E_\tau^{U_3,\psi_{U_3}}(g,s)dg
\end{equation}
For $1\le j\le 3$, let $G_j$ denote one of the groups $GL_{2p},\
GSp_{2(2p+1)},\ GSO_{4p}$ or $GE_7$. In integrals \eqref{gl21} and
\eqref{gl22}, we assume that $\pi_1$ is a cuspidal representation.
For $j=1,2$, the sets ${\mathcal O}_{G_j}(\pi_j)$ are listed in
Tables 1 and 2, and similarly for $j=2,3$ for ${\mathcal
O}_{G_j}(E_\tau)$.

In this section we determine which of the above integrals, assuming
that the representations involved in it satisfy the requirements of
Tables 1 and 2,  is a nonzero global unipotent integral. To do that
we carry out the unfolding process. We start with the unfolding of
the Eisenstein series. Let $V=U_2$ or $V=U_3$ be one of the
unipotent groups introduced in section 3 with $m=2$. Thus, for the
group $H=GL_{2p}$, then $V=U_{p,2}({\mathcal O})$, and for the other
classical groups we let $V=U_n({\mathcal O})$. For $H=GE_7$ this
group was denoted in section 3 by $U({\mathcal O})$. By $\psi_V$ we
denote the corresponding character which was defined in section 3.
Assume that $E_\tau(\cdot,s)$ is associated with the induced
representation $Ind_{P({\bf A})}^{H({\bf A})}\tau\delta_P^s$. Here
$H=G_2$ when we consider integral \eqref{gl21}, and  $H=G_3$ when we consider integral \eqref{gl22}. We also assume that $P$ is a maximal
parabolic subgroup of $H$. The representation $\tau$ is an
automorphic representation of $M({\bf A})$ where $M$ is the Levi
part of $P$. Denote by $U(P)$ the unipotent radical of $P$. We 
denote by $U$ the unipotent maximal subgroup of $GL_{2p}$ consisting of
upper triangular matrices. We have
\begin{equation}\label{four1}
E_\tau^{V,\psi_V}(h,s)= \int\limits_{V(F)\backslash V({\bf
A})}\sum_{\gamma\in P(F)\backslash H(F)}f_\tau(\gamma
vh,s)\psi_V(v)dv=
\end{equation}
$$=\sum_{\gamma\in P(F)\backslash
H(F)/V(F)}\ \int\limits_{V^\gamma(F)\backslash V({\bf
A})}f_\tau(\gamma vh,s)\psi_V(v)dv$$ where $V^\gamma=V\cap
\gamma^{-1}P\gamma$.

We call an element $\gamma\in P(F)\backslash H(F)/V(F)$  not
admissible, if there exists an element $v\in V$ such that $\gamma
v\gamma^{-1}\in U(P)$, and such that $\psi_V(v)\ne 1$. Otherwise we
say that $\gamma$ is admissible.  From the definition it follows
that a non-admissible element contributes zero to the above
summation. Our goal is to characterize all the admissible elements.
We shall write all details for the group $H=GL_{2p}$. For the other
classical groups and for $GE_7$ the process is similar.

It follows from the Bruhuat decomposition that every element in
$P(F)\backslash H(F)/V(F)$ can be written as $\gamma=wv_w$. Here $w$
is a Weyl element of $GL_{2p}$, and
$v_w=I_{2p}+z_1e_{1,2}+z_2e_{3,4}+\cdots + z_pe_{2p-1,2p}$ where
$z_i\in F$ and $e_{i,j}$ is the matrix of size $2p$ with one at the
$(i,j)$ entry, and zero elsewhere. We claim that if $w$ is not
admissible, then $wv_w$ is also not admissible. This follows from
the action by conjugation of $v_w$ on the group $V$. Indeed, if
there is a $v\in V$ such that $wvw^{-1}\in U(P)$, and $\psi_V(v)\ne
1$, then we can find an element $v'\in V$ such that $\psi_V(v')\ne
1$, and that $v_wv'v_w^{-1}=v$. From this the claim follows.

Assume that the Levi part of $P$ is $GL_r\times GL_{2p-r}$ with
$p\le r$. Then $2p-r\le r$. Assume that $w$ is admissible. We shall
write $w[i,j]$ for its $(i,j)$ entry. Thus $w[i,j]=0,1$. By a
suitable multiplication from the left by an element of $GL_r\times
GL_{2p-r}$, we may assume that there is a maximal number $q'$ such
that $0\le q'\le 2p-r$ and such that $w[r+i,i]=1$ for all $1\le i\le
q'$. From the maximality of $q'$ we obtain that $w[j,q'+1]=0$ for
all $r+1\le j\le 2p$. Hence, adjusting by an element of $GL_r\times
GL_{2p-r}$, we may assume that $w[1,q'+1]=1$. It is convenient to
consider the cases $q'$ even or odd separately. Assume that $q'=2q$.
Then $w[1,2q+1]=1$. Consider $v=I_{2p}+ze_{2q+1,2q+3}$. Then
$\psi_V(v)\ne 1$. Consider the matrix $wvw^{-1}$. A simple matrix
multiplication implies that if $w[j,2q+3]=1$ for some $r+1\le j\le
2p$, then $wvw^{-1}\in U(P)$ and hence $w$ is not admissible. Thus,
we have $w[j',2q+3]=1$ for some $2\le j'\le r$. By a suitable
multiplication from the left by elements in $GL_r\times GL_{2p-r}$, we may assume that
$w[2,2q+3]=1$. The process is inductive, namely using the same
argument we deduce that $w[3,2q+5]=1$, and so on until
$w[p-q,2p-1]=1$. Thus we have determined the first $p-q$ rows of
$w$.

Consider the next $r-p+q$ rows. Multiplication from the left by elements in  $GL_r\times
GL_{2p-r}$, we let $q_0$ be the smallest positive number such that
$w[p-q+1,2(q+q_0)]=1$. Then arguing as above we deduce that $w$ is
admissible if and only if, after a suitable multiplication by
$GL_r\times GL_{2p-r}$, we have $w[p-q+2,2(q+q_0+1)]=1$, and so on.
Let $q_1$ be such that $w[r,2(q+q_1)]=1$. Using the same argument,
$w$ is admissible if and only if  $2(q+q_1)=2p$. In other words, $w$
is admissible if and only if 
$$w[r,2p]=w[r-1,2(p-1)]=\ldots =
w[p-q+1,2(2p-r-q+1)]=1$$
Thus, so far we determined the first $r+2q$
rows of $w$. But, up to multiplication by $GL_r\times GL_{2p-r}$,
this determines also the last $2p-2q-r$ rows. In other words we have
$w[r+2q+1,2q+2]=w[r+2q+2,2q+4]=\ldots =w[2p, 2(2p-q-r)]=1$.

A similar construction holds when $q'=2q+1$. Writing $q$ for $q'$,
we can parameterize the admissible Weyl elements by the elements
$w_{q}$ with $0\le q\le 2p-r$, where
$$w_{q}=\begin{pmatrix} &L_{q}'&\\ &&I_{2(r-p)+q}\\ I_q&&\\
&L_q''&\end{pmatrix}$$ Here $I_k$ is the identity matrix of size
$k$, and
$$L_q'=\{x\in \text{Mat}_{(2p-r-q)\times 2(2p-r-q)}\ :\ x_{i,2i-1}=1\ \text{for}\ 1\le
i\le 2p-r-q\ \text{and}\ x_{i,j}=0\ \text{elsewhere}\}$$
$$L_q''=\{y\in \text{Mat}_{(2p-r-q)\times 2(2p-r-q)}\ :\ y_{i,2i}=1\
\text{for}\ 1\le i\le 2p-r-q\ \text{and}\ y_{i,j}=0\
\text{elsewhere}\}$$ Notice that
$$w_{2p-r}=\begin{pmatrix} &I_r\\ I_{2p-r}&\end{pmatrix}$$
From all this we conclude that we may consider those double cosets
whose representatives are of the form $w_qz(r_1,r_2,\ldots,r_p)$
where $r_i\in F$ and
$$z(r_1,r_2,\ldots,r_p)=I_{2p}+r_1e_{1,2}+r_2e_{3,4}+\cdots
+r_pe_{2p-1,2p}$$ To eliminate more double cosets, we specify the
Eisenstein series as in lemma \ref{lem2} part {\bf A}. Thus we
assume that ${\mathcal O}(E_\tau(\cdot,s))$ is equal to
$((p+1)(p-1))$ or $((p+2)(p-2))$. Also, for reasons which will be
clear later, we further restrict the Eisenstein series, and assume
that $i$, as appears in lemma \ref{lem2}, is odd.  In other words,
we assume that, using induction by stages, there is a number $a$ and
an odd number $i$ such that $E_\tau(\cdot,s)$ is induced from that
parabolic subgroup. Thus, we have three cases. First, if ${\mathcal
O}(E_\tau(\cdot,s))=((p+1)(p-1))$ then we assume that
$M=GL_{2a-1}\times GL_{2(p-a)+1}$ and ${\mathcal
O}(\tau_1)=(a(a-1))$ and ${\mathcal O}(\tau_2)=((p-a+1)(p-a))$. In
the second case ${\mathcal O}(E_\tau(\cdot,s))=((p+2)(p-2))$ and
there are two possible induction data. The first possibility is
$M=GL_{2a-3}\times GL_{2(p-a)+3}$ and ${\mathcal
O}(\tau_1)=(a(a-3))$ and ${\mathcal O}(\tau_2)=((p-a+2)(p-a+1))$.
The second possibility is $M=GL_{2a-1}\times GL_{2(p-a)+1}$ and
${\mathcal O}(\tau_1)=(a(a-1))$ and ${\mathcal
O}(\tau_2)=((p-a+2)(p-a-1))$. However,  changing $a$ in the first
possibility with $p-a+2$ gives us the second possibility.

To summarize, if ${\mathcal O}(E_\tau(\cdot,s))=((p+1)(p-1))$ then
the induction data is ${\mathcal O}(\tau_1)=(a(a-1))$ and ${\mathcal
O}(\tau_2)=((p-a+1)(p-a))$. If ${\mathcal
O}(E_\tau(\cdot,s))=((p+2)(p-2))$, then the induction data is
${\mathcal O}(\tau_1)=(a(a-1))$ and ${\mathcal
O}(\tau_2)=((p-a+2)(p-a-1))$.

We return to the computation of $E_\tau^{V,\psi_V}(h,s)$. Suppose
that $V$ contains a subgroup $V_1$ such that
$\gamma^{-1}V_1\gamma\subset M$. Then the right most integral in
identity \eqref{four1} contains the integral
$$\int\limits_{V_1(F)\backslash V_1({\bf
A})}f_\tau(\gamma^{-1}v_1\gamma h')\psi_V(v_1)dv_1$$ as inner
integration. This integral defines certain Fourier coefficients of
the automorphic functions $\varphi_{\tau_1}$ and $\varphi_{\tau_2}$.
If, for some $\gamma$ the unipotent orbit corresponding to one of
these Fourier coefficients is strictly greater than ${\mathcal
O}(\tau_1)$ or ${\mathcal O}(\tau_2)$, then the above integral is
zero, and hence the contribution to \eqref{four1} from this
representative is zero.

Let $\gamma=w_qz(r_1,r_2,\ldots,r_p)$. To handle the elements
$z(r_1,r_2,\ldots,r_p)$, we consider the subgroup $V'$ of $V$
defined by $V'=\{ v\in V\ :\ v_{2j,2j-1}=0;\ 1\le j\le p-1\}$.
Notice that $z(r_1,r_2,\ldots,r_p)$ normalizes $V'$, and if by
restriction, we consider the character  $\psi_V$ as a character of
$V'$, then $\psi_V(
z(r_1,r_2,\ldots,r_p)^{-1}v'z(r_1,r_2,\ldots,r_p))=\psi_V(v')$.
Therefore, if we replace $V$ by $V'$ and take $V_1$ to be a subgroup
of $V'$, then we may ignore the unipotent part of $\gamma$. Recall
that $i$, as defined in lemma \ref{lem2} is odd. This means that
both numbers $r$ and $2p-r$ are odd. We recall that $GL_r\times GL_{2p-r}$ is the Levi part of $P$, the parabolic subgroup we used to construct  the Eisenstein series.
The Weyl elements which we
still need to consider are given by $w_q$ where we take $q=2t,2t+1$
with $0\le t\le \frac{2p-r-1}{2}$. It follows from matrix
multiplication that after conjugating by $w_{2t}$ and by $w_{2t+1}$
we obtain on $\varphi_{\tau_1}$ the Fourier coefficient
corresponding to the unipotent orbit $((p-t)(r-p+t))$, and on
$\varphi_{\tau_2}$ the Fourier coefficient corresponding to the
unipotent orbit $((2p-r-t)t)$.

Because of the induction data of the Eisenstein series, as given in
lemma \ref{lem2}, these unipotent orbits must satisfy
$((2p-r-t)t)\le (a(a-1))$ or $((p-t)(r-p+t))\le (a(a-1))$. For
otherwise, the above integral will be zero. But $(a(a-1))$ is the
smallest unipotent orbit of $GL_{2p}$ of the form $(n_1n_2)$, and
hence either $((2p-r-t)t)=(a(a-1))$ or $((p-t)(r-p+t))=(a(a-1))$. In
both cases we obtain $t=\frac{2p-r-1}{2}$. Thus, in the
factorization of \eqref{four1}  we are left with two possible
nonzero contributions corresponding to the Weyl elements
$w_{2p-r-1}$ and $w_{2p-r}$.

To continue we now unfold the global integrals \eqref{gl21} and
\eqref{gl22}.  First, unfolding the Eisenstein series, we notice
that we need to consider representatives of the space of double
cosets $P\backslash H/V\cdot G$. Using the above discussion, we only
need to consider two types of representatives. They are
$w_{2p-r-1}z(r_1,r_2,\ldots,r_p)$ and
$w_{2p-r}z(r_1,r_2,\ldots,r_p)$. However, it is not hard to check
that all of these representatives, which were distinct when we
considered $P\backslash H/V$, are now reduced to one element, which
we choose to be $w_{2p-r-1}$. We denote this element by $w_0$.

Thus, for $\text{Re}(s)$ large, integral \eqref{gl21} is equal to
\begin{equation}\label{gl23}
\int\limits_{Z({\bf A})B(F)\backslash G({\bf
A})}\varphi_{\pi_1}^{U_1,\psi_{U_1}}(g) \int\limits_{V^{w_0}({\bf
A})\backslash V({\bf
A})}f_\tau^{V^{w_0},\psi_{w_0}}(w_0vg,s)\psi_V(v)dvdg
\end{equation}
and integral \eqref{gl22} is equal to
\begin{equation}\label{gl24}
\int\limits_{Z({\bf A})B(F)\backslash G({\bf
A})}\varphi_{\pi_1}^{U_1,\psi_{U_1}}(g)\varphi_{\pi_2}^{U_2,\psi_{U_2}}(g)
\int\limits_{V^{w_0}({\bf A})\backslash V({\bf
A})}f_\tau^{V^{w_0},\psi_{w_0}}(w_0vg,s)\psi_V(v)dvdg
\end{equation}
Here $B$ is the Borel subgroup of $G=GL_2$ which consists of all
upper unipotent matrices, and
$$f_\tau^{V^{w_0},\psi_{w_0}}(h,s)=
\int\limits_{V^{w_0}(F)\backslash V^{w_0}({\bf
A})}f_\tau(v_0h,s)\psi_{w_0}(v_0)dv_0$$  If the Levi part of $P$ is
$GL_{2a-1}\times GL_{2(p-a)+1}$, then this Fourier coefficient
corresponds to the unipotent orbit $(a(a-1))$ of $GL_{2a-1}$, and
corresponds to the unipotent orbit $((p-a+1)(p-a))$ of
$GL_{2(p-a)+1}$.

Let $U(B)$ denote the unipotent radical of the Borel group $B$.
Consider first the case when ${\mathcal
O}(E_\tau(\cdot,s))=((p+1)(p-1))$. Then the induction data is
${\mathcal O}(\tau_1)=(a(a-1))$ and ${\mathcal
O}(\tau_2)=((p-a+1)(p-a))$. Arguing in a similar way as in the proof
of the first part of lemma \ref{fourgl1}, we deduce that for all
$u\in U(B)({\bf A})$ we have
$f_\tau^{V^{w_0},\psi_{w_0}}(w_0^{-1}uw_0h,s)=f_\tau^{V^{w_0},\psi_{w_0}}(h,s)$.
Thus, integrals \eqref{gl23} and \eqref{gl24} are equal to
\begin{equation}\label{gl25}
\int\limits_{Z({\bf A})T(F)U(B)({\bf A})\backslash G({\bf A})}\
\int\limits_{U(B)(F)\backslash U(B)({\bf A})}
\varphi_{\pi_1}^{U_1,\psi_{U_1}}(ug) R_\tau(g)dudg
\end{equation}
and
\begin{equation}\label{gl26}
\int\limits_{Z({\bf A})T(F)U(B)({\bf A})\backslash G({\bf A})}\
\int\limits_{U(B)(F)\backslash U(B)({\bf A})}
\varphi_{\pi_1}^{U_1,\psi_{U_1}}(ug)\varphi_{\pi_2}^{U_2,\psi_{U_2}}(ug)
R_\tau(g)dudg
\end{equation}
Here
$$R_\tau(g)=\int\limits_{V^{w_0}({\bf A})\backslash V({\bf
A})}f_\tau^{V^{w_0},\psi_{w_0}}(w_0vg,s)\psi_V(v)dv$$ and $T$ is
defined as all matrices of the form $T=\{\text{diag}(c,1)\ : c\in
F^*\}$. Consider first integral \eqref{gl25}. We apply lemma
\ref{fourgl1} part {\bf b)} to obtain that the integral
$$\int\limits_{U(B)(F)\backslash U(B)({\bf A})}
\varphi_{\pi_1}^{U_1,\psi_{U_1}}(ug)du$$ is a sum of terms which are
related to all unipotent orbits which are strictly greater than the
ones listed in that lemma part {\bf a)}, and to a certain constant
term. By cuspidality of $\pi_1$ we may ignore the summand with the
constant term. Also, the case we consider now corresponds to the
first column in Table 1. Thus ${\mathcal O}_{G_1}(\pi_1)$ consists
of the unipotent orbit specified in the first row of that Table.
From this, and from the computations done in the proof of lemma
\ref{fourgl1}, we obtain
$$\int\limits_{U(B)(F)\backslash U(B)({\bf A})}
\varphi_{\pi_1}^{U_1,\psi_{U_1}}(ug)du= \sum_{t\in
T(F)}L_{\pi_1}^{R_1}(tg)$$  Here $L_{\pi_1}$ is defined in the
beginning of section 2 and is given by
$$L_{\pi_i}(g_i)=\int\limits_{V_i(\pi_i)(F)\backslash V_i(\pi_i)({\bf
A})}\varphi_{\pi_i}(v_ig_i)\psi_{V_i(\pi_i)}(v_i)dv_i$$ where this
Fourier coefficient corresponds to the unipotent orbit as specified
in the first row in the second column of Table 1. Plugging the above
identity into integral \eqref{gl25}, collapsing summation with
integration, we deduce that in this case, integral \eqref{global1}
is a global unipotent integral.

Next consider integral \eqref{gl26}. Expand the function
$\varphi_{\pi_1}^{U_1,\psi_{U_1}}(ug)$ along the unipotent group
$U(B)$. By lemma \ref{fourgl1} part {\bf b)}, and by the cuspidality
of $\pi_1$, we may ignore the contribution from the constant term.
From part {\bf a)} of that lemma we obtain
\begin{equation}\label{expa1}
\varphi_{\pi_1}^{U_1,\psi_{U_1}}(ug)=\sum_{t\in T(F)}
\int\limits_{U(B)(F)\backslash U(B)({\bf A})}
\varphi_{\pi_1}^{U_1,\psi_{U_1}}(u_1tug)\psi_{U(B)}(u_1)du_1du
\end{equation}
Plug this into integral \eqref{gl26}. Use the fact that $\pi_2$ and
$f_\tau$ are left invariant by $T(F)$ to collapse summation and
integration. Thus we obtain
\begin{equation}\label{gl27}
\int\limits_{Z({\bf A})U(B)({\bf A})\backslash G({\bf A})}\
\int\limits_{U(B)(F)\backslash U(B)({\bf A})}
L^{R_1}_{\pi_1}(ug)\varphi_{\pi_2}^{U_2,\psi_{U_2}}(ug)
R_\tau(g)dudg
\end{equation}
From the above expansion we deduce that
$L^{R_1}_{\pi_1}(ug)=\psi_{U(B)}(u_1)L^{R_1}_{\pi_1}(g)$ Thus, using
lemma \ref{fourgl1} part {\bf a)},  integral \eqref{gl27} is equal
to
\begin{equation}\label{gl28}
\int\limits_{Z({\bf A})U(B)({\bf A})\backslash G({\bf A})}\
L^{R_1}_{\pi_1}(g)L^{R_2}_{\pi_2}(g) R_\tau(g)dudg
\end{equation}
where $L^{R_j}_{\pi_j}$ is defined as above and corresponds to the
unipotent orbits appearing in the first and second row of Table 2.
Thus, we deduce that also in this case integral \eqref{global1} is a
global unipotent integral.

Finally, we consider integral \eqref{gl21} when ${\mathcal
O}(E_\tau(\cdot,s))=((p+2)(p-2))$. Then the induction data is
${\mathcal O}(\tau_1)=(a(a-1))$ and ${\mathcal
O}(\tau_2)=((p-a+2)(p-a-1))$. Starting with integral \eqref{gl23},
we obtain
\begin{equation}\label{gl29}
\int\limits_{Z({\bf A})T(F)U(B)({\bf A})\backslash G({\bf A})}\
\int\limits_{U(B)(F)\backslash U(B)({\bf A})}
\varphi_{\pi_1}^{U_1,\psi_{U_1}}(ug) R_\tau(ug)dudg\notag
\end{equation}
Notice that in this case the function $R_\tau(g)$ is not left
invariant under $U(B)({\bf A})$. Now we consider the expansion given
by identity \eqref{expa1}. Using cuspidality of $\pi_1$ we may
ignore the contribution from the constant term, and collapsing
summation with integration, we obtain
\begin{equation}\label{gl210}
\int\limits_{Z({\bf A})U(B)({\bf A})\backslash G({\bf A})}\
\int\limits_{U(B)(F)\backslash U(B)({\bf A})} L^{R_1}_{\pi_1}(ug)
R_\tau(ug)dudg\notag
\end{equation}

As before, we have
$L^{R_1}_{\pi_1}(ug)=\psi_{U(B)}(u_1)L^{R_1}_{\pi_1}(g)$, and hence
we obtain
\begin{equation}\label{gl211}
\int\limits_{Z({\bf A})U(B)({\bf A})\backslash G({\bf A})}\
\int\limits_{U(B)(F)\backslash U(B)({\bf A})} L^{R_1}_{\pi_1}(g)
R_\tau^{U(B),\psi_{U(B)}}(g)dg\notag
\end{equation}
Using a variation of lemma \ref{fourgl1} part {\bf a)}, we obtain
that $R_\tau^{U(B),\psi_{U(B)}}(g)$ is a Fourier coefficient of the representation $\tau_1$ which
corresponds to the unipotent orbit $(a(a-1))$ of $GL_{2a-1}$, and a Fourier coefficient of $\tau_2$ which corresponds to
the unipotent orbit $((p-a+2)(p-a-1))$ of $GL_{2(p-a)+1}$. Hence, in
this case, integral \eqref{global1} is also a global unipotent
integral.

To summarize the above, we define the notion of an odd Eisenstein
series. We say that $E_\tau(\cdot,s)$, defined on $GL_{2p}$ is odd,
if, using induction by stages, there is a maximal parabolic subgroup
such that the value of $i$ as appear in lemma \ref{lem2} are odd, and
that $E_\tau(\cdot,s)$ is induced from that parabolic subgroup. Similarly, in
the other classical groups $H$, we define $E_\tau(\cdot,s)$ to be
odd if we can induce from a maximal parabolic subgroup whose Levi
part is $GL_\alpha\times L$ where $\alpha$ is odd. Here $L$ is a
classical group of the same type of $H$. We prove

\begin{theorem}\label{th1}
Assume that $G=GL_2$, and that all the groups $G_j$ are classical
groups. Then the global integral \eqref{global1} is a nonzero global
unipotent integral if and only if one of the representations
appearing in Tables 1 or 2, is an odd Eisenstein series.
\end{theorem}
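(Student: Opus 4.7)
The plan splits the biconditional into an \emph{if} and an \emph{only if} direction, each running the unfolding machinery developed above for $H = GL_{2p}$ with a different parity of the inducing parameter $i$ from Lemma \ref{lem2}. The structural input in both directions is the admissibility analysis of the double coset space $P \backslash H / V \cdot G$, together with Lemma \ref{fourgl1} for the Fourier coefficients on the cuspidal factor $\pi_1$ (and, in the length-$3$ case, $\pi_2$).

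For the \emph{if} direction, assume that one of the Eisenstein series appearing in \eqref{global1}---either $E_\tau$, or in the length-$3$ case of Table \ref{ta2} possibly $\pi_2$---is odd, so that it is induced from a maximal parabolic with Levi factor $GL_\alpha \times L$, $\alpha$ odd. The excerpt has executed this direction for $H = GL_{2p}$: odd $i$ allows the Bruhat parameter $t = (2p - r - 1)/2$ to take an integer value that simultaneously realizes $\mathcal{O}(\tau_1)$ and $\mathcal{O}(\tau_2)$, the admissible Weyl representatives collapse under the $G$-action to the single element $w_0 = w_{2p - r - 1}$, and \eqref{gl28} exhibits the resulting integral in the target form \eqref{global2}. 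I would repeat this for $H = GSp_{2(2p+1)}$ and $H = GSO_{4p}$: the analogous Bruhat decomposition with odd $GL_\alpha$-block, the same reduction to a unique surviving double coset, the corresponding cases of Lemma \ref{fourgl1} to process the Fourier coefficient on $\pi_1$, and the Fourier expansion \eqref{expa1} to absorb the extra factor when $l = 3$. When it is $\pi_2$ rather than $E_\tau$ that is odd, swapping their roles in the unfolding yields the same conclusion.

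For the \emph{only if} direction, suppose that no Eisenstein series in the integral is odd, so every inducing parameter $\alpha$, equivalently $i$ in Lemma \ref{lem2}, is even. I would rerun the double coset analysis with this parity constraint. The key point is that when $r$ is even, the admissibility conditions $((p-t)(r-p+t)) \le \mathcal{O}(\tau_1)$ and $((2p-r-t)t) \le \mathcal{O}(\tau_2)$ cannot be realized by a single Bruhat parameter $t$ attached to a surviving double coset in the same clean way as in the odd case. Tracking the parity through the unfolding then forces one of two outcomes: either the induced Fourier coefficient on $\varphi_{\tau_j}$ corresponds to a unipotent orbit strictly greater than $\mathcal{O}(\tau_j)$, in which case it vanishes by the very definition of $\mathcal{O}_{G_l}(E_\tau(\cdot,s))$, or the surviving double cosets produce a sum that cannot be recombined into the form \eqref{global2}. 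In either case \eqref{global1} fails to be a nonzero global unipotent integral.

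The main obstacle I anticipate is the careful parity bookkeeping for the only-if direction, and ensuring the argument is uniform across all three classical group types $GL_{2p}$, $GSp_{2(2p+1)}$, $GSO_{4p}$ and across both Eisenstein orbits appearing in the second and third columns of Tables \ref{ta1} and \ref{ta2}. For the symplectic and orthogonal groups the Bruhat decomposition runs through the $B$- and $D$-type Weyl groups and signed partition data, but I do not expect any genuinely new conceptual input beyond this bookkeeping; cuspidality of $\pi_1$ continues to kill the constant-term contributions flagged by Lemma \ref{fourgl1}(b), and the parity mismatch between $r$ and the admissible $q = 2t, 2t+1$ remains the decisive mechanism throughout.
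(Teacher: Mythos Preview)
Your \emph{if} direction is fine and matches the paper: the odd case was carried out in detail before the theorem for $H=GL_{2p}$, and the other classical types are handled the same way.

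Your \emph{only if} direction, however, misses the actual mechanism and as written is not a proof. You propose to rerun the admissibility analysis with $r$ even and argue that either the induced Fourier coefficient on $\tau_j$ lands on an orbit strictly larger than ${\mathcal O}(\tau_j)$ (hence vanishes), or the surviving cosets ``cannot be recombined'' into the form \eqref{global2}. The first alternative does not in fact occur: in the even case there \emph{is} an admissible double coset with nonvanishing contribution. The second alternative is asserted but not argued; ``cannot be recombined'' is not a proof of anything.

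The paper's argument is different and much more direct. When every Eisenstein series is even, the Levi is $GL_{2r}\times GL_{2(p-r)}$ with both blocks even. The point is that the block-swap Weyl element
\[
w_0=\begin{pmatrix} &I_{2(p-r)}\\ I_{2r}& \end{pmatrix}
\]
conjugates the diagonally embedded $G=GL_2\subset GL_{2p}$ \emph{into} $M$; this is exactly what evenness of the block sizes buys you. Hence the contribution from $w_0$ to the unfolding retains the full $G(F)\backslash G({\bf A})$ integration as an inner period (integral \eqref{even1}), now against $\varphi_{\tau_1}^{U_r,\psi_{U_r}}\varphi_{\tau_2}^{U_{p-r},\psi_{U_{p-r}}}$ in place of $E_\tau^{U_l,\psi_{U_l}}$. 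One then iterates: any remaining Eisenstein series is again even, so one unfolds it via the analogous $w_0$ and again obtains a period of the same shape. Eventually no Eisenstein series remain, and one is left with a period integral over the reductive quotient $Z({\bf A})G(F)\backslash G({\bf A})$. This is manifestly not of the form \eqref{global2}, so \eqref{global1} is not a global unipotent integral. The same observation (even Levi $\Rightarrow$ $G$ sits inside a conjugate of $M$) works verbatim for $GSp$ and $GSO$. Your parity bookkeeping on the parameter $t$ never surfaces this structural fact, which is the whole content of the direction.
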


\begin{proof}
The case when one of the Eisenstein series is odd was considered
above. Hence, we may assume that none of the Eisenstein series
appearing in integral \eqref{global1} is odd. As before we treat the
case where the Eisenstein series is defined on $H=GL_{2p}$. Assume
that $E_\tau(\cdot,s)$ is associated with the induced representation
$Ind_{P({\bf A})}^{H({\bf A})}\tau\delta_P^s$, where $P$ is the
maximal parabolic subgroup whose Levi part is $GL_{2r}\times
GL_{2(p-r)}$. Consider the Weyl element
$$w_0=\begin{pmatrix} &I_{2(p-r)}\\ I_{2r}&\end{pmatrix} $$
Unfolding the Eisenstein series, we consider the contribution to
integral \eqref{global1} from the double coset representative $w_0$.
A simple matrix conjugation implies that, as an inner integration,
we obtain an integral which involves the period integral
\begin{equation}\label{even1}
\int\limits_{Z({\bf A})G(F)\backslash G({\bf
A})}\varphi_{\pi_1}^{U_1,\psi_{U_1}}(g)\varphi_{\pi_2}^{U_2,\psi_{U_2}}(g)\ldots
\varphi_{\pi_{l-1}}^{U_{l-1},\psi_{U_{l-1}}}(g)\varphi_{\tau_1}^{U_r,\psi_{U_r}}(g)
\varphi_{\tau_2}^{U_{p-r},\psi_{U_{p-r}}}(g)dg
\end{equation}
Here, $U_r=U_{2r,2}$ the unipotent subgroup which was defined in
section 3, and $\psi_{U_r}$ is the character of this group as
defined in that section. Similarly for $U_{p-r}$. This process is
inductive. Namely, if any other representation is an Eisenstein
series, then by assumption it is not an odd Eisenstein series, and
we can further unfold it. Taking the right double coset
representative, similar to $w_0$, we obtain as inner integration
which is similar to the one given by integral \eqref{even1}.  From
this we conclude that eventually we will obtain a global integral
which involves a period integral of the type given by integral
\eqref{even1} where none of the representations is an Eisenstein series. This integral, if not zero, involves also integration
over a reductive group. Therefore, in this case, integral
\eqref{global1} is not a global {\sl unipotent} integral.

\end{proof}

To complete the classification for the group $G=GL_2$, we need to
consider integrals \eqref{gl21} and \eqref{gl22} where the
Eisenstein series $E_\tau(\cdot,s)$ is defined on the exceptional
group $GE_7({\bf A})$. We say that $E_\tau(\cdot,s)$ is an odd
Eisenstein series if the Levi part $M$ of the parabolic subgroup
from which we form the Eisenstein series, does not contain all three
roots $\alpha_2, \alpha_5$ and $\alpha_7$. In other words,
$E_\tau(\cdot,s)$ is odd if $M$ contains a subgroup of the type
$E_6$ or $A_6$ or $A_4\times A_2$. Here we label the roots of $GE_7$
as in \cite{G4}. In other words, $E_\tau(\cdot,s)$ is odd if $M$
does not contain the diagonal copy of $GL_2$ which stabilizes the
character $\psi_{U({\mathcal O})}$ as defined in section 3. Indeed,
from the definition of this character, it follows that this copy of
$GL_2$, contains the group $SL_2$ generated by
$x_{\alpha_2}(r)x_{\alpha_5}(-r)x_{\alpha_7}(r)$ and
$x_{-\alpha_2}(r)x_{-\alpha_5}(-r)x_{-\alpha_7}(r)$. We prove a
similar result to theorem \ref{th1}. We have
\begin{theorem}\label{th2}
Assume that $G=GL_2$, and that the Eisenstein series
$E_\tau(\cdot,s)$ is defined on the exceptional group $GE_7({\bf
A})$. Then the global integral \eqref{global1} is a nonzero global
unipotent integral if and only if the Eisenstein series appearing in
Tables 1 or 2, is an odd Eisenstein series.
\end{theorem}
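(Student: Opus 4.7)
The plan is to parallel the proof of Theorem \ref{th1}, now carried out inside the exceptional group $GE_7$. For the ``if'' direction, suppose $E_\tau$ is an odd Eisenstein series, so that the Levi subgroup $M$ of its inducing parabolic $P$ omits at least one of the roots $\alpha_2,\alpha_5,\alpha_7$. I would unfold $E_\tau^{U_3,\psi_{U_3}}(g,s)$ against representatives of $P(F)\backslash GE_7(F)/U({\mathcal O})(F)\cdot GL_2(F)$, using the notion of admissibility introduced around \eqref{four1}: a double coset $\gamma$ contributes zero unless $\psi_{U({\mathcal O})}$ restricts trivially to $\gamma^{-1}U(P)\gamma\cap U({\mathcal O})$. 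The oddness hypothesis is precisely what allows a single Weyl representative $w_0$ to survive, one for which $w_0\,U({\mathcal O})\,w_0^{-1}\cap M$ supports a character that matches $\psi_{U({\mathcal O})}$ on the root spaces of at least one of $\alpha_2,\alpha_5,\alpha_7$, and hence defines a Fourier coefficient of $\tau$ on an orbit which is still supported by $\tau$.

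With $w_0$ identified, the remaining unfolding would mirror the classical argument of Section 7. I would Fourier-expand $\varphi_{\pi_1}^{U_1,\psi_{U_1}}$ along the upper unipotent $U(B)$ of $GL_2$ using the $GE_7$-analogue of Lemma \ref{fourgl1}(b); cuspidality of $\pi_1$ kills the constant term, and Tables 1--2 (which force ${\mathcal O}(\pi_1)=E_7(a_2)$ among the exceptional options) identify the surviving piece with the Fourier coefficient $L_{\pi_1}^{R_1}$. Collapsing the sum over $T(F)$ into the outer integral puts \eqref{gl23} and \eqref{gl24} into the form \eqref{global2}, establishing that \eqref{global1} is a nonzero global unipotent integral.

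For the ``only if'' direction, assume instead that $E_\tau$ is even, so $M$ contains all three of $\alpha_2,\alpha_5,\alpha_7$, and hence contains the diagonal copy of $GL_2$ that stabilizes $\psi_{U({\mathcal O})}$. I would take the Weyl representative analogous to the $w_0$ producing \eqref{even1} in the classical argument; its effect is to convert the unfolded integral into a period integral of $\pi_1,\dots,\pi_{l-1}$ against Fourier coefficients of the pieces of $\tau$, integrated over a reductive subgroup of $GE_7$ strictly larger than a unipotent quotient of $GL_2$. As in the proof of Theorem \ref{th1}, if any of $\pi_2,\dots,\pi_{l-1}$ is itself an Eisenstein series it must be even by assumption, and iterating the same unfolding reduces matters to a terminal period with no Eisenstein series; such an integral is not a global unipotent integral in the sense of Definition \ref{def1}.

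The main obstacle will be the explicit root-level verification in $E_7$. For each maximal parabolic $P$ of $GE_7$ whose Levi either omits or contains each of $\alpha_2,\alpha_5,\alpha_7$, one must check that the admissibility reduction does isolate a unique surviving double coset representative and that its restricted character on $M$ produces exactly the Fourier coefficient of $\tau$ demanded by Tables 1--2. The structure constants of $E_7$ and the combinatorics of the orbit $E_6$, used in Section 3 to define $U({\mathcal O})$ and $\psi_{U({\mathcal O})}$, make this step genuinely more delicate than the purely type-$A$ book-keeping in the proof of Theorem \ref{th1}; tracking which root-space contributions survive conjugation by $w_0$ is where I expect the bulk of the effort to lie.
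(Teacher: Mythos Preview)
Your outline matches the paper's proof: both handle the even case by producing an analogue of integral \eqref{even1} over the surviving reductive quotient, and handle the odd case by isolating a single double-coset representative $w_0$ and then running the $U(B)$-expansion via Lemma \ref{fourgl1} exactly as in Theorem \ref{th1}. The paper supplies the concrete input you correctly flag as the obstacle: it constructs $w_0$ by taking the shortest Weyl element $w_1$ in $M\backslash GE_7$, multiplying by the reflection $w[257]$ in the diagonal $GL_2$, and passing to the shortest element in that $M$-coset; it then splits into three cases according to whether $M$ has type $A_6$, $E_6$, or $A_4\times A_2$, writes down $V^{w_0}\cap M$ and $\psi_{w_0}$ explicitly in each, and uses the dimension identity of Lemma \ref{lemex} to pin down ${\mathcal O}_M(\tau)$ (also distinguishing whether ${\mathcal O}_{GE_7}(E_\tau)=E_7(a_2)$ or $E_7(a_1)$, paralleling the $((p+1)(p-1))$ versus $((p+2)(p-2))$ split in the classical case).
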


\begin{proof}
The idea is the same as in the classical groups. First, if the
Eisenstein series is not odd, then a similar argument as in the
classical groups proves that the integral \eqref{global1} is not a
global unipotent integral. More precisely, assume that none of the
Eisenstein series appearing in integral \eqref{global1} is odd,
either on the classical groups or on $GE_7$. Then, it is not hard to
produce a Weyl element, which can be taken as a representative of
the double cosets $P\backslash H/V\cdot GL_2$, such that we obtain
an integral of the type of integral \eqref{even1}, as inner
integration. Thus we conclude that integral \eqref{global1} is not a
nonzero global unipotent integral.

Next we consider the case where the Eisenstein series
$E_\tau(\cdot,s)$ is an odd Eisenstein series defined on $GE_7({\bf
A})$. Thus there are three cases to consider. In of each of them, we
first write a certain Weyl element $w_0$, which will be the only
double coset representative of $P\backslash H/V\cdot GL_2$ which
will contribute a nonzero term in the unfolding process. For that
element we also write down the group $w_0^{-1} (V\cdot GL_2) w_0\cap
M$. To obtain $w_0$, we first write down $w_1$ which is the shortest
Weyl element in $M\backslash GE_7$. Then we consider the Weyl
element $w_1w[257]$ where $w[257]$ is the unique reflection in the
group $GL_2$ as embedded above in $GE_7$. Let $w_0$ denote the
shortest Weyl element which is in the same coset $M\backslash GE_7$
as $w_1w[257]$. Thus, ignoring the contributions to integral
\eqref{global1} from the other terms, we then consider the integral
\begin{equation}\label{e71}
\int\limits_{Z({\bf A})B(F)\backslash G({\bf
A})}\varphi_{\pi_1}^{U_1,\psi_{U_1}}(g)\varphi_{\pi_2}^{U_2,\psi_{U_2}}(g)
\int\limits_{V^{w_0}({\bf A})\backslash V({\bf
A})}f_\tau^{V^{w_0},\psi_{w_0}}(w_0vg,s)\psi_V(v)dvdg
\end{equation}
Here $B$ is the Borel subgroup of $G=GL_2$ which consists of all
upper unipotent matrices, and
$$f_\tau^{V^{w_0},\psi_{w_0}}(h,s)=
\int\limits_{V^{w_0}(F)\backslash V^{w_0}({\bf
A})}f_\tau(v_0h,s)\psi_{w_0}(v_0)dv_0$$ 
Integral \eqref{e71} corresponds to the case of integral \eqref{gl22}. However, if we assume that $\varphi_{\pi_2}^{U_2,\psi_{U_2}}(g)$ is one for all $g$, then it covers also the case of integral \eqref{gl21}. Notice that this is exactly as in integral
\eqref{gl24} where we considered the classical groups. To complete
the study of these cases, we need to determine the groups
$V^{w_0}\cap M$ and the sets ${\mathcal O}_M(\tau)$. Then, using
lemma \ref{fourgl1}, we argue in a similar way as in the the case of
the classical groups. To determine the unipotent orbit ${\mathcal
O}_M(\tau)$ we use the dimension identity $\text{dim}\ E_\tau=
\text{dim}\ \tau+\text{dim}\ U(P)$ established in lemma \ref{lemex}.
Here $U(P)$ is the unipotent radical of $P$.  It follows from Tables
1 and 2 that ${\mathcal O}_{GE_7}(E_\tau)$ is $E_7(a_2)$ or
$E_7(a_1)$. Hence, $\text{dim}\ E_\tau= 61,62$. From this it is easy
to determine $\text{dim}\ \tau$, and hence to determine ${\mathcal
O}_M(\tau)$. There are three  types of odd Eisenstein series, and we
consider each one of them.

{\bf 1)}\ Suppose that $M$ is of type $A_6$. Then $\text{dim}\
U(P)=42$, and hence $\text{dim}\ \tau=19$ if ${\mathcal
O}_{GE_7}(E_\tau)=E_7(a_2)$ and $\text{dim}\ \tau=20$ if ${\mathcal
O}_{GE_7}(E_\tau)=E_7(a_1)$. Hence ${\mathcal O}_M(\tau)=(52)$ in
the first case and ${\mathcal O}_M(\tau)=(61)$ in the second case.
As for the group $V^{w_0}\cap M$ in this case, it is defined as
follows. Let $U$ denote the maximal unipotent subgroup of $GL_7$.
Then $V^{w_0}\cap M=\{ u\in U\ : u_{1,2}=u_{3,4}=0\}$. The character
$\psi_{w_0}$ is defined as
$$\psi_{w_0}(v)=\psi(v_{1,3}+v_{2,4}+v_{4,5}+v_{5,6}+v_{6,7})$$ The
corresponding Fourier coefficient is associated with the unipotent
orbit $(52)$ of $GL_7$.

{\bf 2)}\ Suppose that $M$ is of type $E_6$. In this case we have
$\text{dim}\ \tau=34,35$, and hence ${\mathcal
O}_M(\tau)=D_5,E_6(a_1)$. The group $V^{w_0}\cap M$ in this case is
defined as follows. Let $Q$ denote the parabolic subgroup of $GE_7$
whose Levi part is $T(GE_7)\cdot(SL_2\times SL_2)$ where the group
$SL_2\times SL_2$ is generated by $x_{\pm\alpha_2}$ and
$x_{\pm\alpha_3}$. Also, $T(GE_7)$ is the maximal torus of $GE_7$.
Let $U(Q)$ denote the unipotent radical of $Q$. Then $V^{w_0}\cap
M=U(Q)$. The character $\psi_{w_0}$ in the case is defined as
follows. For $u\in U(Q)$, write
$$u=x_{\alpha_1}(r_1)x_{\alpha_3+\alpha_4}(r_2)x_{\alpha_2+\alpha_4}(r_3)
x_{\alpha_5}(r_4)x_{\alpha_6}(r_5)u'$$ Here $u'$ is an element in
$U(Q)$ which is a product of one dimensional unipotent subgroups of
$U(Q)$ corresponding to positive roots of $E_7$ and does not include
any one of the above five roots. Then, we define
$\psi_{w_0}(u)=\psi(r_1+r_2+\cdots +r_5)$. It is not hard to check
that the corresponding Fourier coefficient
$f_\tau^{V^{w_0},\psi_{w_0}}$ is associated to the unipotent orbit
$D_5$ of $E_7$.

{\bf 3)}\ Suppose that $M$ is of type $A_4\times A_2$. In this case
$\text{dim}\ \tau=11,12$. The unipotent group $V^{w_0}\cap M$,
viewed as a subgroup of $GL_5\times GL_3$ is defined as follows. Let
$U_5$ denote the standard maximal unipotent subgroup of $GL_5$, and
similarly define $U_3$. Define $V_5=\{ v\in U_5\ : u_{1,2}=0\}$ and
$V_3=\{ v\in U_3\ : u_{1,2}=0\}$. Then we have $V^{w_0}\cap
M=V_5\times V_3$. The character $\psi_{w_0}$, is then a product of
$\psi_{w_0,5}$ and $\psi_{w_0,3}$ defined on the groups $V_5$ and
$V_3$. Here, $\psi_{w_0,5}(v)=\psi(v_{2,3}+v_{3,4}+v_{4,5})$ and
$\psi_{w_0,3}(v)=\psi(v_{2,3})$. Thus, on $GL_5$ this Fourier
coefficient corresponds to the unipotent orbit $(41)$, and on $GL_3$
it corresponds to the orbit $(21)$. From this we can determine the
sets ${\mathcal O}(\tau_i)$. If $\text{dim}\ \tau=11$, then the only
option is ${\mathcal O}(\tau_1)=(41)$ and ${\mathcal
O}(\tau_2)=(21)$. There are other cases with $\text{dim}\ \tau=11$
which we ignore since at least one of the sets ${\mathcal
O}(\tau_i)$ does not support $f_\tau^{V^{w_0},\psi_{w_0}}$. When
$\text{dim}\ \tau=12$ there are two options. The first option is
${\mathcal O}(\tau_1)=(5)$ and ${\mathcal O}(\tau_2)=(21)$, and the
second option is ${\mathcal O}(\tau_1)=(41)$ and ${\mathcal
O}(\tau_2)=(3)$.

Next we proceed as in the case of the classical groups. Assume first
that ${\mathcal O}_{GE_7}(E_\tau)=E_7(a_2)$. Then we obtain that
$f_\tau^{V^{w_0},\psi_{w_0}}(uh,s)=f_\tau^{V^{w_0},\psi_{w_0}}(h,s)$
for all $u\in U(B)({\bf A})$, and now we proceed exactly as in
integrals \eqref{gl25} and \eqref{gl26}. In the second case, when
${\mathcal O}_{GE_7}(E_\tau)=E_7(a_1)$ we proceed exactly as with
the case of  ${\mathcal O}(E_\tau(\cdot,s))=((p+2)(p-2))$ in the
classical groups. See right after integral \eqref{gl28}.

Finally, we need to analyze the contribution to the unfolding
process from other double cosets representatives of $P\backslash
H/V\cdot GL_2$. We need to show that all of them contribute zero to
the global integral. The process of doing it is similar to the one
carried out in details for $H=GL_{2p}$. We omit the details of this
computation.

\end{proof}


\section{\bf Proof of Lemma \ref{lem1}}

In this section we prove lemma \ref{lem1}. We will consider the case
of $D_5$ in details. The case of $D_5(a_1)$ is similar. Let $\pi$
denote an irreducible cuspidal representation of $GE_6({\bf A})$. We
assume that ${\mathcal O}(\pi)=D_5$ and derive a contradiction. We
describe the Fourier coefficient associated with this unipotent
orbit. Let $P=MU$ denote the parabolic subgroup of $GE_6$ whose Levi
part is $M=T\cdot (SL_2\times SL_2)$. Here $T$ is the maximal torus
of $GE_6$ and the two copies of $SL_2$ are generated by $x_{\pm
001000};\ x_{\pm 000010}$. Consider the group $U/[U,U]$. As coset
representatives we may choose the one parameter subgroups $x_\alpha$
where $\alpha$ is one of the following nine roots
$$(100000);\ (101000);\ (000001);\ (000011);\ (000100);\ (001100);\
(000110);\ (001110);\ (010000)$$ The group $M$ acts on these
representatives as follows. On the first two it acts, up to a power
of the determinant, as the standard representation of $GL_2$ which
contains the $SL_2$ generated by $x_{\pm 001000}$. On the next two
representatives its acts similarly, but this time the $GL_2$
contains the group generated by $x_{\pm 000010}$. On the next four
$M$ acts as the tensor product of $GL_2\times GL_2$, and on the last
representatives, it acts as a one dimensional representation. From
this we can define the corresponding Fourier coefficient. Given
$u\in U$ write
$$u=x_{100000}(r_1)x_{001100}(r_2)x_{000110}(r_3)x_{000011}(r_4)x_{010000}(r_5)u'$$
Here $u'\in U$ is any product of one parameter subgroups associated
with positive roots of $E_6$ which do not include the above five
roots. Denote $\psi_U(u)=\psi(r_1+r_2+\cdots + r_5)$ and define the
Fourier coefficient associated with the unipotent orbit $D_5$ by
\begin{equation}\label{fourier1}
\int\limits_{U(F)\backslash U({\bf A})}\varphi_\pi(u)\psi_U(u)du
\end{equation}
The assumption that ${\mathcal O}(\pi)=D_5$ asserts that this
Fourier coefficient is not zero for some choice of data, but any
Fourier coefficient of $\pi$ associated with the unipotent orbits
$E_6$ or $E_6(a_1)$ is zero for all choice of data.

For $1\le i\le 6$, let $w_i$ denote the simple reflection associated
with the root $\alpha_i$. Let $w_0=w_6w_5w_4w_3w_2w_4w_5w_1w_3$. We
have
$$w_0 \alpha_1=\alpha_2;\ w_0 (001100)=\alpha_4;\ w_0 (000110)=\alpha_1;\
w_0 (000011)=\alpha_5;\ w_0 \alpha_2=\alpha_3$$ Conjugating by
$w_0$, the above Fourier coefficient is equal to
$$\int\limits_{V^-(F)\backslash V^-({\bf A})}
\int\limits_{V^+(F)\backslash V^+({\bf A})}
\int\limits_{U(D_5)(F)\backslash U(D_5)({\bf
A})}\varphi_\pi(uv^+v^-w_0)\psi_{U(D_5)}(u)dudv^+dv^-$$ where the
notations are as follows. First, the group $U(D_5)$ is the maximal
unipotent subgroup of type $D_5$ generated by the simple roots
$\alpha_i$ for $1\le i\le 5$. The character $\psi_{U(D_5)}$ is the
Whittaker coefficient defined on $U(D_5)$. The group $V^+$ consists
of all unipotent elements $x_\alpha$ where $\alpha$ is one of the
roots
$$ (111211);\ (011221);\ (112211);\ (111221);\ (112221);\ (112321);\
(122321)$$ Similarly, the group $V^-$ is defined by all
$x_{-\alpha}$ where $\alpha$ is one of the roots
$$(101111);\ (011111);\ (001111);\ (010111);\ (000111);\ (000011);\
(000001)$$ Thus, by definition, the above integral is not zero for
some choice of data.

We expand the above integral along the one parameter subgroup
$x_{111111}(r)$. Thus, the above integral is equal to
$$\int\sum_{\xi\in F}\int\limits_{F\backslash {\bf A}}\int
\varphi_\pi(ux_{111111}(r)v^+v^-w_0)\psi_{U(D_5)}(u)\psi(\xi
r)dudrdv^+dv^-$$ Conjugate from left to right by the element
$x_{-(101111)}(-\xi)$. Changing variables, first in $U(D_5)$ and
then in $V^+$ we obtain that the integral
$$\int\limits_{V_1^-(F)\backslash V_1^-({\bf A})}
\int\limits_{V_1^+(F)\backslash V_1^+({\bf A})}
\int\limits_{U(D_5)(F)\backslash U(D_5)({\bf
A})}\varphi_\pi(uv^+v^-)\psi_{U(D_5)}(u)dudv^+dv^-$$ is not zero for
some choice of data. Here $V_1^+$ consists of all elements
$x_{\alpha}$ in $V^+$ including  $x_{111111}$. The group $V_1^-$
consists of all $x_\alpha$ in $V^-$ without the root $-(101111)$.
Thus $\text{dim}\ V_1^+=\text{dim}\ V^+ +1$ and $\text{dim}\
V_1^-=\text{dim}\ V^- -1$.

Proceed with this expansion four more times. First expand along
$x_{011211}$ and use the element $x_{-(011111)}$. Then expand along
$x_{101111}$ and use $x_{-(001111)}$, then expand along $x_{011111}$
and use $x_{-(010111)}$, and finally expand along $x_{001111}$ and
use $x_{-(000111)}$. We deduce that the integral
$$\int\limits_{V_2^-(F)\backslash V_2^-({\bf A})}
\int\limits_{V_2^+(F)\backslash V_2^+({\bf A})}
\int\limits_{U(D_5)(F)\backslash U(D_5)({\bf
A})}\varphi_\pi(uv^+v^-)\psi_{U(D_5)}(u)dudv^+dv^-$$ is not zero for
some choice of data. To describe the notations in the above
integral,  let $R$ denote the unipotent radical of the maximal
parabolic subgroup of $E_6$ whose Levi part contains $Spin_{10}$
which contains the group $U(D_5)$. Thus $R$ is the abelian group
generated by all $x_\alpha$ such that
$\alpha=\sum_{i=1}^5n_i\alpha_i +\alpha_6$. Then $V_2^+$ consists of
all $x_\alpha\in R$  not including the roots $(000001);\ (000011);\
(000111);\ (010111)$. Thus $\text{dim}\ V_2^+=\text{dim}\ R-4=12$.
The group $V_2^-$ consists of all $x_{-\alpha}$ such that $\alpha$
is one of the two roots $(000011);\ (000001)$.

Next we expand the above integral along the unipotent subgroup
$x_{010111}(r)$. Consider first the contribution from the non
trivial character. We claim that it contributes zero to the
expansion. Indeed, in this case after a conjugation by the Weyl
element $w_5w_6$ it is not hard to check that we obtain the Fourier
coefficient of $\pi$ which is associated with the unipotent orbit
$E_6(a_1)$. By the assumption that ${\mathcal O}(\pi)=D_5$, we
deduce that this Fourier coefficient is zero. Thus, we are left with
the contribution from the trivial character, and we obtain that the
integral
$$\int\limits_{V_2^-(F)\backslash V_2^-({\bf A})}
\int\limits_{V_3^+(F)\backslash V_3^+({\bf A})}
\int\limits_{U(D_5)(F)\backslash U(D_5)({\bf
A})}\varphi_\pi(uv^+v^-)\psi_{U(D_5)}(u)dudv^+dv^-$$ is not zero for
some choice of data. Here $V_3^+$ is the group generated by $V_2^+$
and $x_{010111}$. Now we expand along $x_{000111}$ and as before we
use the element $x_{-(000011)}$ and then expand along $x_{000011}$
and use $x_{-(000001)}$. Thus we obtain that the integral
$$\int\limits_{V_4^+(F)\backslash V_4^+({\bf A})}
\int\limits_{U(D_5)(F)\backslash U(D_5)({\bf
A})}\varphi_\pi(uv^+)\psi_{U(D_5)}(u)dudv^+$$ is not zero for some
choice of data. Here $V_4^+$ is the group generated by $V_3^+$ and
$x_{000111}$ and $x_{000011}$. Finally, we expand along
$x_{000001}$. The  contribution from the nontrivial orbit gives
zero. Indeed, in this case we obtain the Fourier coefficient of
$\pi$ associated with the unipotent orbit $E_6$. As argued above, it
is zero. The contribution from the trivial orbit  also contributes
zero. Indeed, in this case we obtain as an inner integration, the
constant term along the unipotent radical $R$. By cuspidality of
$\pi$ it is zero. Thus the above integral is zero and we derived a
contradiction.

The case when ${\mathcal O}(\pi)=D_5(a_1)$ is similar. We give some
details. Let $U'$ denote the unipotent radical of the parabolic
subgroup of $E_6$ whose Levi part is $T\cdot SL_2$ where the $SL_2$
is generated by $x_{\pm \alpha_4}$. Thus $\text{dim}\ U'=35$. Let
$U$ be the subgroup of $U'$ where we omit the 3 unipotent elements
$x_{001100};\ x_{000010};\ x_{000110}$. Thus $\text{dim}\ U=32$.
Given $u\in U$ write
$$u=x_{010000}(r_1)x_{101100}(r_2)x_{000011}(r_3)x_{000111}(r_4)x_{001110}(r_5)u'$$
where $u'\in U$ is an element generated by all $x_\alpha$ such that
$\alpha$ is not one of the above five roots. Define
$\psi_U(u)=\psi(r_1+r_2+\cdots + r_5)$. Then we can form the
corresponding Fourier coefficient given by the integral
\eqref{fourier1}. Let $w_0=w_6w_5w_4w_3w_2w_4w_5w_1$. We have
$w_0(010000)=\alpha_3;\ w_0(010100)=(001100);\
w_0(101100)=(010100);\ w_0(000011)=\alpha_5;\ w_0(001110)=\alpha_1$.
The next step is to expand the integral, and use the fact ${\mathcal
O}(\pi)=D_5(a_1)$. Eventually, we obtain as inner integration, a
constant term along a certain unipotent radical, which is zero by
cuspidality. We omit the details.

\end{document}